\documentclass[11pt]{amsart}

\usepackage{geometry}
\usepackage{amssymb}
\usepackage{mathtools}
\usepackage{xcolor}
\usepackage[colorlinks=true,citecolor=red,linkcolor=blue,urlcolor=red]{hyperref}

\DeclareMathOperator{\Sym}{Sym}

\DeclareMathOperator{\Proj}{Proj}
\DeclareMathOperator{\Mor}{Mor}
\DeclareMathOperator{\Frac}{Frac}
\DeclareMathOperator{\trdeg}{trdeg}

\newcommand{\Mm}{\mathbf{M}}

\numberwithin{equation}{section}

\newtheorem{thm}{Theorem}[section]
\newtheorem{lem}[thm]{Lemma}
\newtheorem{prop}[thm]{Proposition}

\theoremstyle{definition}
\newtheorem{defn}[thm]{Definition}

\newtheorem{rem}[thm]{Remark}
\newtheorem{setup}[thm]{Set-up}

\begin{document}

\title{A klt generalized pair with infinitely generated canonical ring}

\author{Jihao Liu}
\address{Department of Mathematics, Peking University, No. 5 Yiheyuan Road, Haidian District, Beijing 100871, China}
\address{Beijing International Center for Mathematical Research, Peking University, No. 5 Yiheyuan Road, Haidian District, Beijing 100871, China}
\email{liujihao@math.pku.edu.cn}

\author{Yanze Wang}
\address{Academy of Mathematics and Systems Science, Chinese Academy of Sciences, No. 55 Zhongguancun East Road, Haidian District, Beijing, 100190, China}
\email{wangyanze@amss.ac.cn}

\subjclass[2020]{14E30, 13A50, 14K05}
\keywords{Generalized pairs, canonical rings, finite generation, Hilbert's fourteenth problem, anti-affine groups}
\date{\today}

\begin{abstract}
We construct a projective klt generalized pair over the complex numbers whose generalized log canonical ring is not finitely generated. The main result of this paper is obtained by generative AI, particularly GPT-5.6-sol-ultra, Fable 5, and the Danus system.
\end{abstract}

\maketitle

\section{Introduction}\label{sec:introduction}

By the celebrated finite generation theorem \cite[Corollary~1.1.2]{BCHM10}, the log canonical ring
\[
R(X,K_X+B):=\bigoplus_{m\geq 0}H^0(X,\mathcal O_X(\lfloor m(K_X+B)\rfloor))
\]
of a projective klt pair $(X,B)$ over $\mathbb C$ is a finitely generated $\mathbb C$-algebra. Generalized pairs, introduced by Birkar and Zhang \cite{BZ16}, enlarge the category of pairs by an auxiliary nef part $\Mm$, and have become a central tool in modern birational geometry. 

It is natural to ask whether the finite generation theorem persists in this larger category, that is, whether the generalized log canonical ring $R(X,K_X+B+\Mm_X)$ of a projective klt generalized pair $(X,B,\Mm)$ is always finitely generated. At first glance, this seems to be impossible, as most positivity results such as non-vanishing and abundance fail for generalized pairs. The finite generation of the canonical ring is also known to fail for lc generalized pairs \cite[Example 2.2]{LX23}. However, the construction of such an example seems more difficult than expected. A key reason is that all current examples for weird properties of generalized pairs support the finite generation property of the canonical ring. For example, if non-vanishing fails, then the canonical ring is trivial hence obviously finitely generated. The failure of the finite generation for the log canonical case relies on the fact that big and nef but not semi-ample divisors do not have finitely generated canonical rings, yet a big and nef generalized canonical class of a klt generalized pair is automatically semi-ample. Moreover, for divisors with small Kodaira dimension, their canonical rings are automatically finitely generated, which indicates that we have to find counterexamples in high dimensions.

The first author first learned of the finite generation question from Paolo Cascini in 2023 (who referred him to Yoshinori Gongyo), and later got the question from Gongyo directly at the Simons conference in 2024. The first reference addressing this question can be traced back to a research summary of Gongyo in a 2023 annual report \cite{Gon24}. The first author has discussed the question with at least Caucher Birkar, Osamu Fujino, Christopher D. Hacon, Junpeng Jiao, Vladimir Lazić, and Lingyao Xie in the past few years; despite the general feeling that a negative answer is very likely, no precise counterexample could be found. Thanks to the development of generative AI, in this paper, we construct the following example which shows that the finite generation of the canonical ring fails for klt generalized pairs in general. It is worth mentioning that Gongyo and Takayama \cite{GT24} proved the finite generation of the canonical ring for klt generalized pairs in many special cases.

\begin{thm}\label{thm:main}
There exists a projective klt generalized pair $(X,B,\Mm)$ over $\mathbb C$ with the following properties.
\begin{enumerate}
\item $X$ is smooth, $B=0$, $\Mm$ descends on $X$, and there is a nef Cartier divisor $\xi$ on $X$ such that
$$K_X\sim -16\xi,\quad \Mm_X\sim 17\xi.$$
In particular, $-K_X$ is nef.
\item The generalized log canonical ring
    \[
    R(X,K_X+B+\Mm_X)=R(X,\xi)=\bigoplus_{m\geq 0}H^0(X,\mathcal O_X(m\xi))
    \]
    is not a finitely generated $\mathbb C$-algebra. In particular,
    $$R(X,-K_X)=R(X,16\xi)$$
    is not a finitely generated $\mathbb C$-algebra.
    \item We have
    $$\kappa(X,\xi)=11<15=\nu(X,\xi).$$
    In particular, $K_X+B+\Mm_X$ is nef but neither big nor abundant.
\end{enumerate}
\end{thm}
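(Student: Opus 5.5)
The plan is to realize $R(X,\xi)$ as the invariant ring of a linear unipotent group action that is known to be infinitely generated, namely one of Mukai's counterexamples to Hilbert's fourteenth problem. The geometric input is an anti-affine group, which kills all non-constant regular functions on the torsor. Concretely, I would take an abelian variety $A$ of dimension $4$ and its universal vector extension
\[
0\to U\to G\to A\to 0,\qquad U\cong H^1(A,\mathcal O_A)^\vee\cong \mathbb G_a^4 .
\]
It is classical (Rosenlicht, Brion) that $G$ is anti-affine, i.e. $\mathcal O(G)=\mathbb C$. Let $V=\mathbb C^{16}=(\mathbb C^2)^{\oplus 8}$ carry Mukai's representation of $U$. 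Here $\mathbb G_a^8$ acts blockwise by $2\times 2$ unipotent matrices, and $U\cong\mathbb G_a^4\subset \mathbb G_a^8$ is a general $4$-dimensional linear subspace. Set $\mathcal E=G\times^{U}V^\vee$, a rank-$16$ bundle on $A$, and $X=\mathbb P(\mathcal E)$ (projectivization of quotients) with $\xi=\mathcal O_X(1)$. Then $\pi_*\mathcal O_X(m\xi)=\Sym^m\mathcal E$.

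\textbf{Step 1: invariants.} Since $G\to A$ is a $U$-torsor and $\mathcal O(G)=\mathbb C$, I will show
\[
H^0(X,m\xi)=H^0(A,\Sym^m\mathcal E)=\bigl(\mathcal O(G)\otimes \Sym^m V^\vee\bigr)^U=(\Sym^m V^\vee)^U .
\]
This identifies $R(X,\xi)$ with $\mathbb C[V]^U$ as graded rings.

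\textbf{Step 2: Mukai's theorem.} Mukai's theorem identifies $\mathbb C[V]^U$ with the Cox ring of the blow-up of $\mathbb P^{r-1}$ at $n$ general points, with $r=\dim U=4$ and $n=8$. It is finitely generated iff $\tfrac12+\tfrac1r+\tfrac1{n-r}>1$. Here $\tfrac12+\tfrac14+\tfrac14=1$, so equivalently $\mathrm{Bl}_8\mathbb P^3$ is not a Mori dream space, and the ring is not finitely generated. Its transcendence degree is $2n-r=12$, which gives $\kappa(X,\xi)=11$. The same conclusion then holds for the Veronese subring $R(X,16\xi)$, since finite generation passes between a graded domain and its Veronese subrings.

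\textbf{Step 3: numerics and the generalized pair.} Since $U$ is unipotent, $\mathcal E$ has a filtration with trivial graded pieces. Hence $\mathcal E$ is numerically flat with $\det\mathcal E\cong\mathcal O_A$, and $\xi$ is nef. The relative Euler sequence gives
\[
K_X=-16\xi+\pi^*(K_A+\det\mathcal E)\sim-16\xi .
\]
All Chern classes of $\mathcal E$ are numerically trivial, so $\xi^{16}\equiv0$ while $\xi^{15}\cdot\pi^*[\mathrm{pt}]=1$. This gives $\nu(X,\xi)=15$. Put $B=0$ and $\Mm=17\xi$, a nef divisor on the smooth variety $X$ which descends trivially. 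Then $(X,0,\Mm)$ is klt and $K_X+\Mm_X\sim\xi$, which establishes (1), (2) and (3).

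\textbf{Main obstacle.} The hard step is Step 1, together with choosing $U$ so that Mukai's genericity hypothesis holds. For Step 1, the identification of sections with invariants needs $\mathcal O(G)=\mathbb C$, which I will derive from anti-affineness of the universal vector extension. We also need $U$ to equal exactly $H^1(A,\mathcal O)^\vee$, so that this choice of torsor is available. The subspace $U\subset\mathbb G_a^8$ can then be chosen by a general linear map $\mathbb C^4\to\mathbb C^8$, and this freedom is exactly what supplies the $8$ general points in $\mathbb P^3$. I would check carefully that Mukai's correspondence, including the blockwise torus weights, matches this set-up. I would also verify the standard fact that $\mathrm{Bl}_8\mathbb P^3$ at general points has infinitely many $(-1)$-type extremal divisors, which makes it not a Mori dream space.
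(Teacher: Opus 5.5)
Your proposal is correct and follows the paper's argument essentially step for step: the universal vector extension of an abelian fourfold, anti-affineness giving $H^0(A,\Sym^m\mathcal E)\cong(\Sym^mW)^U$, $K_X\sim-16\xi$ from $\det\mathcal E\cong\mathcal O_A$, $\Mm=\overline{17\xi}$, $\nu=15$ from the vanishing Chern classes, and $\kappa=\operatorname{trdeg}_{\mathbb C}R(X,\xi)-1=11$ as in Remark~\ref{rem:rosenlicht}. The only substantive difference is where non-finite generation comes from: you take a very general $U\subset\mathbb G_a^8$ and invoke Mukai's criterion for eight general points of $\mathbb P^3$ (Mukai's $r$ is the codimension of $U$ in $\mathbb G_a^8$, which here happens to equal $\dim U=4$), whereas the paper uses Totaro's explicit cube subgroup and so needs no genericity --- and your Veronese-subring remark supplies the step for $R(X,-K_X)=R(X,16\xi)$ that the paper leaves implicit.
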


\begin{rem}
It seems that Theorem \ref{thm:main} also provides the first example such that $X$ is klt and $-K_X$ is nef but $R(X,-K_X)$ is not finitely generated. Indeed, in Theorem \ref{thm:main}, $X$ is smooth.
\end{rem}

Theorem~\ref{thm:main} delimits the known positive results. When $K_X+B+\Mm_X$ is big, finite generation follows from \cite{BCHM10} by perturbing the nef part into the boundary; Gongyo and Takayama prove finite generation for projective klt generalized pairs, with $K_X+B+\Mm_X$ of finite Cartier index, of dimension at most $3$ \cite[Theorem~5.6]{GT24}, as well as in arbitrary dimension under the analytic hypothesis that the nef part carries a semi-positive singular Hermitian metric with vanishing Lelong numbers \cite[Theorem~1.3]{GT24}. Theorem~\ref{thm:main} shows that neither the bigness assumption nor the analytic hypothesis can be removed in general, and that the dimension restriction cannot be extended to all dimensions. Part (3) shows that the example is not an artifact of small Iitaka dimension: the ring $R(X,K_X+B+\Mm_X)$ has transcendence degree $12$.

We now explain the construction. Totaro, refining work of Nagata \cite{Nag59} and Mukai \cite{Muk01} on Hilbert's fourteenth problem, constructed an explicit linear representation $V=\mathbb A^{16}$ of the vector group $U=(\mathbb G_a)^4$ over any field $k$ of characteristic not $2$ whose ring of invariants $\mathcal O(V)^U$ is not finitely generated \cite[Corollary~7.1]{Tot08}. On the other hand, by a theorem of Brion, the universal vector extension $G=E(A)$ of an abelian variety $A$ over a field of characteristic $0$ is \emph{anti-affine}: every global regular function on $G$ is constant \cite[Proposition~2.3(i)]{Bri09}. Taking $\dim A=4$, the group $G$ is a torsor under $U$ over $A$, and Totaro's representation $W=V^\vee$ gives rise to an associated vector bundle $\mathcal E=G\times^U W$ of rank $16$ on $A$, an iterated extension of trivial line bundles. Anti-affineness of $G$ forces every equivariant morphism from $G$ to a representation to be constant, and a descent argument then identifies
\[
\bigoplus_{m\geq 0}H^0(A,\Sym^m\mathcal E)\cong\bigoplus_{m\geq 0}(\Sym^mW)^U=\mathcal O(V)^U
\]
as graded $\mathbb C$-algebras. Now let $X=\mathbb P_A(\mathcal E)$ with tautological class $\xi$. Since $\det\mathcal E\cong\mathcal O_A$ and $K_A\sim 0$, the projective bundle formula gives $K_X\sim-16\xi$, so the choice $B=0$ and $\Mm=\overline{17\xi}$ yields a klt generalized pair with $K_X+B+\Mm_X\sim\xi$ and
\[
R(X,\xi)=\bigoplus_{m\geq 0}H^0(A,\Sym^m\mathcal E)\cong\mathcal O(V)^U,
\]
which is not finitely generated.

\begin{rem}\label{rem:dimension}
By \cite[Theorem~5.6]{GT24}, any example as in Theorem~\ref{thm:main} has dimension at least $4$, while our example has dimension $19$. It would be interesting to determine the minimal dimension of a projective klt generalized pair with infinitely generated canonical ring.
\end{rem}

\begin{rem}\label{rem:lelong}
By \cite[Theorem~1.3]{GT24} applied in contrapositive form, for the generalized pair of Theorem~\ref{thm:main} the nef Cartier divisor $\Mm_X=17\xi$, as well as its pullback to any smooth higher birational model of $X$, carries no semi-positive singular Hermitian metric whose local potentials have vanishing Lelong numbers at every point. This gives an algebraic construction of nef line bundles that are maximally far from semi-positive in the analytic sense, in the spirit of the examples of Demailly--Peternell--Schneider \cite{DPS94} on projectivized rank-two bundles over elliptic curves.
\end{rem}

\begin{rem}
The main result of this paper was obtained using generative AI, particularly GPT-5.6-sol-ultra, Fable 5, and the Danus system. Danus is a specialized agent built on the Rethlas system and substantially more capable of conducting fundamental mathematical research. Human verification and polishing were done afterwards. See \cite{Liu+26} and \cite{Ju+26} for detailed introductions to the Danus system and the Rethlas system, respectively. Due to the limitation of generative AI, it is possible that we have missed some related references in the literature, and we welcome any comments from experts.
\end{rem}

\subsection*{Acknowledgements}
The work was partially supported by the National Key R\&D Program of China \#\allowbreak 2024YFA1014400.
The first author would like to thank other members of the Danus team (namely Guoxiong Gao, Zeming Sun, Bin Wu, Shurui Liu, Jiedong Jiang, Haocheng Ju, Leheng Chen, Ronnie Cheng, Xiping Zhang, and Bin Dong) and the Rethlas team (namely Haocheng Ju, Jiedong Jiang, Shurui Liu, Guoxiong Gao, Yuefeng Wang, Zeming Sun, Bin Wu, Liang Xiao, and Bin Dong) for their contributions to the development of Danus and Rethlas.
The first author would like to thank Ruochuan Liu and Gang Tian for constant support and encouragement. The second author would like to thank Yifei Chen for constant support.

\section{Preliminaries}\label{sec:preliminaries}

We adopt the standard notation and terminology for the minimal model program from \cite{Sho92,KM98,BCHM10} and use them freely. We work over the field of complex numbers $\mathbb C$.

\subsection{Generalized pairs}

We recall the definitions we need from \cite{BZ16}, in the special form in which the nef part descends on $X$ itself; this is the only case used in this paper.

\begin{defn}[b-divisors and closures]\label{defn:bdivisor}
Let $X$ be a normal projective variety. A \emph{b-divisor} $\Mm$ on $X$ is a collection of $\mathbb R$-divisors $\Mm_Y$, one for each projective birational morphism $g\colon Y\to X$ from a normal variety $Y$, which is compatible under pushforward: if $h\colon Y'\to Y$ is a further projective birational morphism, then $h_*\Mm_{Y'}=\Mm_Y$. The divisor $\Mm_Y$ is called the \emph{trace} of $\Mm$ on $Y$. For a nef $\mathbb Q$-Cartier $\mathbb Q$-divisor $N$ on a projective birational model $X'\to X$, the \emph{closure} $\overline N$ is the b-divisor whose trace on every projective birational model $Y\to X$ dominating $X'$ is the pullback of $N$, and whose trace on an arbitrary model is the pushforward of this pullback from a common resolution. A b-divisor $\Mm$ is \emph{b-nef} and \emph{b-$\mathbb Q$-Cartier} if $\Mm=\overline{\Mm_{X'}}$ for such a pair $(X',\Mm_{X'})$; when one may take $X'=X$, we say that $\Mm$ \emph{descends on $X$}, and then the trace of $\Mm$ on every model $g\colon Y\to X$ is $g^*\Mm_X$. Only the descend-on-$X$ case is used in this paper.
\end{defn}

\begin{defn}[generalized pairs]\label{defn:gpair}
A \emph{generalized pair} $(X,B,\Mm)$ over $\mathbb C$ consists of a normal projective variety $X$, an effective $\mathbb Q$-divisor $B$ on $X$, and a b-nef b-$\mathbb Q$-Cartier b-divisor $\Mm$ on $X$, such that $K_X+B+\Mm_X$ is $\mathbb Q$-Cartier, where $\Mm_X$ is the trace of $\Mm$ on $X$.

Let $(X,B,\Mm)$ be a generalized pair and let $g\colon Y\to X$ be a log resolution of $(X,B)$ on which $\Mm$ descends. Write
\[
K_Y+B_Y+\Mm_Y=g^*(K_X+B+\Mm_X).
\]
We say that $(X,B,\Mm)$ is \emph{generalized klt} if every coefficient of $B_Y$ is $<1$. This condition is independent of the choice of $g$ \cite[Section~4]{BZ16}.

The \emph{generalized log canonical ring} of $(X,B,\Mm)$ is
\[
R(X,K_X+B+\Mm_X):=\bigoplus_{m\geq 0}H^0(X,\mathcal O_X(\lfloor m(K_X+B+\Mm_X)\rfloor)).
\]
\end{defn}

\begin{lem}\label{lem:linear-equivalence}
Let $X$ be a normal projective variety and let $D$ and $D'$ be $\mathbb Q$-Cartier $\mathbb Q$-divisors on $X$ with $D\sim D'$. Then $R(X,D)\cong R(X,D')$ as graded $\mathbb C$-algebras.
\end{lem}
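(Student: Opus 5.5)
Since $D\sim D'$, there is a nonzero rational function $f\in\mathbb C(X)^\times$ with $D'=D+\operatorname{div}(f)$. Here linear equivalence of $\mathbb Q$-divisors means the difference is the divisor of a rational function, so the difference is an integral principal divisor. The plan is to use $f$ to build an explicit graded isomorphism. For each $m\geq 0$, the divisor $m\operatorname{div}(f)=\operatorname{div}(f^m)$ is integral, and so
\[
\lfloor mD'\rfloor=\lfloor mD+\operatorname{div}(f^m)\rfloor=\lfloor mD\rfloor+\operatorname{div}(f^m).
\]
Subtracting an integral divisor commutes with taking round-downs, since this can be checked coefficientwise along each prime divisor. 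This is the one point that needs care, because $D$ and $D'$ are only $\mathbb Q$-divisors; it is exactly why we need $\operatorname{div}(f)$ to be integral.

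Next we identify the graded pieces. On the normal variety $X$ we have $H^0(X,\mathcal O_X(L))=\{g\in\mathbb C(X)^\times:\operatorname{div}(g)+L\geq 0\}\cup\{0\}$ for any Weil divisor $L$. For $g\in H^0(X,\mathcal O_X(\lfloor mD\rfloor))$ we therefore get
\[
\operatorname{div}(gf^{-m})+\lfloor mD'\rfloor=\operatorname{div}(g)+\lfloor mD\rfloor\geq 0.
\]
Hence multiplication by $f^{-m}$ gives a $\mathbb C$-linear bijection
\[
\phi_m\colon H^0(X,\mathcal O_X(\lfloor mD\rfloor))\to H^0(X,\mathcal O_X(\lfloor mD'\rfloor)),
\]
whose inverse is multiplication by $f^m$.

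Finally we set $\phi=\bigoplus_m\phi_m$ and check that it is a ring map. The multiplication in $R(X,D)$ is the product of rational functions, sending the $m$-th and $m'$-th pieces into the $(m+m')$-th piece; this uses $\lfloor mD\rfloor+\lfloor m'D\rfloor\leq\lfloor (m+m')D\rfloor$. For $g_m,g_{m'}$ in these pieces,
\[
\phi(g_mg_{m'})=g_mg_{m'}f^{-(m+m')}=\phi(g_m)\phi(g_{m'}),
\]
so $\phi$ respects multiplication. It sends $1$ to $1$ in degree $0$ and preserves degrees, so it is an isomorphism of graded $\mathbb C$-algebras. No step is a real obstacle; the only subtle point is the rounding compatibility in the first paragraph.
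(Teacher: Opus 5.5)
Your proposal is correct and is the same argument as the paper's: multiply degree-$m$ sections by the $m$-th power of the rational function realizing the linear equivalence, and check compatibility with products. You go in the direction $D\to D'$ via $f^{-m}$, while the paper goes $D'\to D$ via $s^m$; your explicit check that $\lfloor mD+\operatorname{div}(f^m)\rfloor=\lfloor mD\rfloor+\operatorname{div}(f^m)$, using that $\operatorname{div}(f^m)$ is integral, fills in a detail the paper leaves implicit.
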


\begin{proof}
Let $s$ be a nonzero rational function on $X$ with $D'=D+\operatorname{div}(s)$. Multiplication by $s^m$ defines an isomorphism $H^0(X,\mathcal O_X(\lfloor mD'\rfloor))\to H^0(X,\mathcal O_X(\lfloor mD\rfloor))$ for every $m\geq 0$, and these isomorphisms are compatible with the multiplication maps on both sides. Taking the direct sum over $m$, we obtain the desired isomorphism of graded $\mathbb C$-algebras.
\end{proof}

\subsection{Projective bundles}

We fix once and for all the following convention: for a locally free sheaf $\mathcal E$ of rank $r$ on a smooth projective variety $A$, we let
\[
\mathbb P_A(\mathcal E):=\Proj_A(\Sym\mathcal E),
\]
with projection $\pi\colon\mathbb P_A(\mathcal E)\to A$ and Serre twisting line bundle $\mathcal O_X(1)$, where $X=\mathbb P_A(\mathcal E)$, and a Cartier divisor $\xi$ on $X$ with $\mathcal O_X(\xi)\cong\mathcal O_X(1)$; all statements below are invariant under this choice by Lemma~\ref{lem:linear-equivalence}. Under this convention,
\begin{equation}\label{eq:pushforward}
\pi_*\mathcal O_X(m)=\Sym^m\mathcal E\quad\text{for every integer } m\geq 0
\end{equation}
by \cite[II, Proposition~7.11]{Har77}, and the relative Euler sequence
\begin{equation}\label{eq:euler}
0\to\Omega_{X/A}\to(\pi^*\mathcal E)\otimes\mathcal O_X(-1)\to\mathcal O_X\to 0
\end{equation}
gives
\begin{equation}\label{eq:relative-canonical}
\omega_{X/A}\cong\mathcal O_X(-r)\otimes\pi^*\det\mathcal E
\end{equation}
by \cite[III, Exercise~8.4]{Har77}.

A vector bundle $\mathcal E$ on $A$ is \emph{nef} if $\mathcal O_{\mathbb P_A(\mathcal E)}(1)$ is nef, and \emph{numerically flat} if both $\mathcal E$ and $\mathcal E^\vee$ are nef; this is the convention of \cite[Chapter~6]{Laz04}, whose projectivization is the same $\Proj_A(\Sym\mathcal E)$ as above. We use the following fact: an extension of nef vector bundles is nef \cite[Theorem~6.2.12]{Laz04}.

\subsection{Totaro's representation}

We use the following theorem of Totaro, which realizes Mukai's counterexamples to Hilbert's fourteenth problem by explicit representations.

\begin{thm}[{\cite[Corollary~7.1]{Tot08}}]\label{thm:totaro}
Let $k$ be a field of characteristic not $2$. Let $(\mathbb G_a)^8$ act on $V=\mathbb A^{16}$, with coordinate functions $x_1,\dots,x_8,y_1,\dots,y_8$, by Nagata's formula \cite{Nag59}
\[
(t_1,\dots,t_8)\cdot(x_1,\dots,x_8,y_1,\dots,y_8)=(x_1,\dots,x_8,y_1+t_1x_1,\dots,y_8+t_8x_8),
\]
and restrict the action to the subgroup $U\cong(\mathbb G_a)^4$ of $(\mathbb G_a)^8$ spanned by the rows of the matrix
\[
\begin{pmatrix}
0&0&0&0&1&1&1&1\\
0&0&1&1&0&0&1&1\\
0&1&0&1&0&1&0&1\\
1&1&1&1&1&1&1&1
\end{pmatrix},
\]
so that $u=(a,b,c,d)\in U$ acts through
\[
(t_1,\dots,t_8)=(d,\ c+d,\ b+d,\ b+c+d,\ a+d,\ a+c+d,\ a+b+d,\ a+b+c+d).
\]
Then the ring of invariants $\mathcal O(V)^U$ is not finitely generated over $k$.
\end{thm}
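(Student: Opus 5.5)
The statement immediately above is Theorem~\ref{thm:totaro}, which is quoted from \cite[Corollary~7.1]{Tot08} and not proved in the paper. I would therefore recall the shape of Totaro's argument rather than reprove it in full.

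\emph{Step 1: invariants of the full group.} First I would compute the invariant ring of the full Nagata group $(\mathbb G_a)^8$ acting on $V$, in the form needed for the subgroup $U$. The localization $\mathcal O(V)^U[x_1^{-1},\dots,x_8^{-1}]$ is easy to describe: on the open set where all $x_i\neq 0$, the quantities $y_i/x_i$ are translated by $t_i$. So $U$-invariants are functions of the $x_i$ and of the linear forms $\sum_i c_i y_i/x_i$ with $c$ in the annihilator of $U$. That annihilator is a $4$-dimensional subspace of $k^8$.

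\emph{Step 2: identification with a Cox ring.} Next I would invoke the Nagata--Mukai identification. The invariant ring $\mathcal O(V)^U$ is isomorphic to a multigraded ring of sections
\[
\bigoplus_{(d,m)} H^0\Bigl(Y,\ dH-\sum_i m_iE_i\Bigr)
\]
on the blow-up $Y$ of $\mathbb P^{3}$ at $8$ points determined by the annihilator, with $H$ the pullback of the hyperplane class and $E_i$ the exceptional divisors. In other words, it is essentially the Cox ring of $Y$. For the points specified by the given matrix, Totaro shows that these $8$ points in $\mathbb P^3$ are very general enough, and in characteristic $\neq 2$ they are the $2$-torsion-type configuration. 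Concretely, they are $\mathbb F_2$-points, that is, the points of $\mathbb P^3$ with coordinates in $\{0,1\}$ satisfying a parity condition.

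\emph{Step 3: infinitely many extremal rays.} The final step, which I expect to be the main obstacle, is showing that $Y$ has infinitely many $(-1)$-type extremal rays. Equivalently, one shows that the cone of effective divisors is not finitely generated, by exhibiting an infinite Weyl group orbit of rigid divisors. Mukai's criterion applies to $\mathbb P^{n-1}$ blown up at points whose number exceeds the $E_n$-type bound: here $n=4$ and $8$ points, so the associated lattice $T_{2,4,4}$ has an infinite Weyl group. Totaro's contribution is to verify that this specific non-general configuration still realizes an infinite orbit. He does this by relating $Y$ to a Kummer-type or anticanonical structure on which translations act with infinite order, which forces infinitely many extremal effective classes. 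A finitely generated Cox ring would give a rational polyhedral effective cone, yielding a contradiction. This also explains the characteristic restriction, since the doubling map is used.
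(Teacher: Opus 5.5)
The paper does not prove this theorem either. It quotes \cite[Corollary~7.1]{Tot08} and adds one sentence on the geometry, so deferring to Totaro is appropriate. Your first two steps are fine: localizing at the $x_i$, and the Nagata--Mukai identification of $\mathcal O(V)^U$ with $\bigoplus H^0(Y,dH-\sum m_iE_i)$ for the blow-up $Y$ of $\mathbb P^3$ at eight points.

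The gap is in your account of \emph{why} this configuration gives infinite generation. That account is inaccurate, and it contradicts itself: the points are ``very general enough'' in Step~2 but ``non-general'' in Step~3. The eight points (the columns of the matrix, or equally of a basis of its annihilator) are the cube vertices $[a:b:c:1]$ with $a,b,c\in\{0,1\}$. There is no parity condition. They are exactly the base locus of the net of quadrics $\langle x^2-xw,\,y^2-yw,\,z^2-zw\rangle$. Eight general points of $\mathbb P^3$ lie on only a pencil of quadrics, so this is a very special position.

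Consequently Mukai's Weyl-group argument for $T_{2,4,4}$ does not transfer. It uses Cremona transformations centred at four of the points, and these degenerate when the four points are coplanar. That happens here for the vertices of each face of the cube: the root $H-E_{i_1}-E_{i_2}-E_{i_3}-E_{i_4}$ is then the class of the strict transform of the face plane. Weyl-group images of the $E_i$ need not be irreducible rigid divisors, so an infinite Weyl group proves nothing here.

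The missing idea is the elliptic fibration that the paper records right after the statement.
\begin{itemize}
\item Since $-\tfrac12K_Y=2H-\sum_iE_i$, the net becomes the base-point-free system $|-\tfrac12K_Y|$. This gives an elliptic fibration $f\colon Y\to\mathbb P^2$ of which every $E_i$ is a section, and Totaro shows that its Mordell--Weil group has rank one.
\item Because $K_Y$ is trivial over $\mathbb P^2$, translation by a section of infinite order is a birational self-map of $Y$ over $\mathbb P^2$ that is an isomorphism in codimension one.
\item Its iterates send $E_1$ to infinitely many distinct prime divisors $D$ with $h^0(Y,\mathcal O_Y(D))=1$.
\item For each such $D$, the section defining $D$ must occur, up to a scalar, in every homogeneous generating set. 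Indeed, any monomial in the generators of degree $[D]$ is a multiple of it, and $D$ is prime. Hence the ring is not finitely generated.
\end{itemize}

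No Kummer structure or doubling map is involved, and that is not where $\operatorname{char}k\neq 2$ enters. In characteristic~$2$ the gradients of $x^2+xw$, $y^2+yw$, $z^2+zw$ are $(w,0,0,x)$, $(0,w,0,y)$, $(0,0,w,z)$. So every fibre of the net is singular at its unique point on $\{w=0\}$. The fibration is then quasi-elliptic, and no section has infinite order.
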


Geometrically, the eight vectors of coefficients above are the eight points $(x,y,z)\in\{0,1\}^3$ of a cube in $\mathbb A^3\subset\mathbb P^3$, which form the base locus of the net of quadrics $\langle x^2-xw,\,y^2-yw,\,z^2-zw\rangle$; the resulting elliptic fibration of the blow-up of $\mathbb P^3$ at these eight points has Mordell--Weil rank one, which produces infinitely many $(-1)$-divisors and forces infinite generation \cite[Theorem~7.2]{Tot08}.

Throughout the paper we write $W:=V^\vee$ for the dual representation, so that $\mathcal O(V)=\Sym W$ and
\begin{equation}\label{eq:invariant-ring}
\mathcal O(V)^U=\bigoplus_{m\geq 0}(\Sym^mW)^U
\end{equation}
as graded $\mathbb C$-algebras. Explicitly, $W$ has basis $x_1,\dots,x_8,y_1,\dots,y_8$, and $u\in U$ acts on $W$ by
\begin{equation}\label{eq:dual-action}
u\cdot x_i=x_i,\qquad u\cdot y_i=y_i-t_ix_i\qquad(1\leq i\leq 8),
\end{equation}
where $(t_1,\dots,t_8)$ is associated with $u$ as in Theorem~\ref{thm:totaro}.

\subsection{Anti-affine groups and the universal vector extension}

An algebraic group $G$ over a field $k$ is \emph{anti-affine} if $\mathcal O(G)=k$, that is, every global regular function on $G$ is constant. Every abelian variety $A$ over $\mathbb C$ admits a \emph{universal vector extension}
\begin{equation}\label{eq:uve}
0\to H^1(A,\mathcal O_A)^\vee\to E(A)\to A\to 0,
\end{equation}
an extension of $A$ by the vector group $H^1(A,\mathcal O_A)^\vee$ which is universal among extensions of $A$ by vector groups; see \cite[Section~2.2]{Bri09} and the references given there. We use the following theorem of Brion.

\begin{thm}[{\cite[Proposition~2.3(i)]{Bri09}}]\label{thm:brion}
Let $A$ be an abelian variety over a field of characteristic $0$. Then $E(A)$ is anti-affine.
\end{thm}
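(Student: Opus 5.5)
The plan is to compute $\mathcal O(E(A))$ as the global sections on $A$ of the pushforward of $\mathcal O_{E(A)}$, and to show by induction on a degree filtration that only constants survive.

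Write $G=E(A)$, $g=\dim A$, $U=H^1(A,\mathcal O_A)^\vee$ and $p\colon G\to A$. Since $p$ is affine, $\mathcal O(G)=H^0(A,p_*\mathcal O_G)$. So the task is to show $H^0(A,p_*\mathcal O_G)=k$.

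\textbf{Step 1: the degree filtration.} The morphism $p$ is a $U$-torsor, locally trivial in the Zariski topology because $H^1$ of a vector group is coherent cohomology. Hence $p_*\mathcal O_G$ is locally $\mathcal O_A\otimes\Sym(U^\vee)$. The transition functions are translations by local sections of $U\otimes\mathcal O_A$, and these preserve the filtration by polynomial degree. Let $\mathcal F_1\subset p_*\mathcal O_G$ be the subsheaf of functions of degree $\le 1$ along the fibres. It sits in an extension
\[
0\to\mathcal O_A\to\mathcal F_1\to U^\vee\otimes\mathcal O_A\to 0 .
\]
Its class in $\operatorname{Ext}^1(U^\vee\otimes\mathcal O_A,\mathcal O_A)=U\otimes H^1(A,\mathcal O_A)=\operatorname{End}(H^1(A,\mathcal O_A))$ is the class of the torsor. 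For the universal vector extension this class is the identity, and I would recheck this against the definition in \cite{Bri09}. Moreover $p_*\mathcal O_G=\varinjlim_n\Sym^n\mathcal F_1$, where the transition maps $\Sym^{n-1}\mathcal F_1\hookrightarrow\Sym^n\mathcal F_1$ are multiplication by the section $1\in\mathcal O_A\subset\mathcal F_1$. Since $A$ is noetherian, $H^0$ commutes with this filtered colimit. It therefore suffices to prove $H^0(A,\Sym^n\mathcal F_1)=k$ for all $n$.

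\textbf{Step 2: induction on $n$.} The case $n=0$ holds because $A$ is proper and connected. For the inductive step, use the exact sequence
\[
0\to\Sym^{n-1}\mathcal F_1\to\Sym^n\mathcal F_1\to\Sym^nU^\vee\otimes\mathcal O_A\to 0 .
\]
It is enough to show that the connecting map $\delta_n\colon\Sym^nU^\vee\to H^1(A,\Sym^{n-1}\mathcal F_1)$ is injective. To do this, compose $\delta_n$ with the map induced by the quotient $\Sym^{n-1}\mathcal F_1\to\Sym^{n-1}U^\vee\otimes\mathcal O_A$. This gives a map
\[
\Sym^nU^\vee\to\Sym^{n-1}U^\vee\otimes H^1(A,\mathcal O_A)=\Sym^{n-1}U^\vee\otimes U^\vee .
\]
A local computation with cocycles (a local section $f$ is changed by translation by $u_{ij}$, to first order $\partial_{u_{ij}}f$) identifies this composite with the polarization map $f\mapsto\sum_i\partial f/\partial e_i\otimes e_i$, twisted by the extension class. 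Because that class is the identity, the composite is exactly the polarization map. Following polarization by multiplication $\Sym^{n-1}U^\vee\otimes U^\vee\to\Sym^nU^\vee$ gives $n\cdot\mathrm{id}$ (Euler's identity). In characteristic $0$ this is injective, so $\delta_n$ is injective and $H^0(\Sym^n\mathcal F_1)=H^0(\Sym^{n-1}\mathcal F_1)=k$.

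\textbf{Main obstacle.} The delicate point is the cocycle computation that identifies the composite connecting map with contraction against the extension class. This requires care with the symmetric-power filtration and with signs. It is also the only place where characteristic $0$ enters: in characteristic $p$, $n\cdot\mathrm{id}$ vanishes for $p\mid n$, and anti-affineness indeed fails there. A secondary point is to confirm that the universal extension's class is the identity, rather than merely an isomorphism. Injectivity of the class would in fact suffice for the argument.
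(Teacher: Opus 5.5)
Your argument is correct. It necessarily takes a different route from the paper, which gives no proof of this statement and simply imports it from \cite[Proposition~2.3(i)]{Bri09}.

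Your computation checks out. The degree-$\leq n$ part of $p_*\mathcal O_G$ is $\Sym^n\mathcal F_1$ by dehomogenization. Modulo degree $\leq n-2$, the cocycle of a degree-$n$ form $f$ is $\sum_k u_{ij}^k\,\partial_k f$. So your composite is $(\mathrm{id}\otimes\theta)\circ P$, with $P$ the polarization and $\theta\colon U^\vee\to H^1(A,\mathcal O_A)$ the torsor class.

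The one input you left open is easy to supply, and injectivity of $\theta$ is indeed all you need. The element $\theta(\lambda)$ is the torsor class of the pushout $\lambda_*E(A)$. If that torsor has a section $s$, then $s(a+b)-s(a)-s(b)$ is a morphism $A\times A\to\mathbb G_a$, hence constant. Therefore $s-s(0)$ is a splitting homomorphism, and universality forces $\lambda=0$.

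Compared with the citation, your route is elementary and yields the finer statement $\dim_k H^0(A,\Sym^n\mathcal F_1)=1$ for all $n$. Here $\mathcal F_1$ is the universal extension of $H^1(A,\mathcal O_A)\otimes\mathcal O_A$ by $\mathcal O_A$, which for $g=1$ is the Atiyah bundle $F_2$. So you are running the check of Remark~\ref{rem:atiyah} in reverse, deducing anti-affineness from Atiyah-type cohomology rather than the other way round.

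If you want something shorter, the induction can be bypassed. $U$ is unipotent and acts locally finitely on $\mathcal O(G)$ with $\mathcal O(G)^U=\mathcal O(A)=k$. So a nonconstant function produces a nonconstant $h$ with $u\cdot h-h\in k$ for all $u$, hence a nonzero additive character of $U$. In characteristic $0$ this character is a linear form $\lambda$. Then $h$ is a global section of $\mathcal F_1$ lifting $\lambda$, so $\theta(\lambda)=0$, a contradiction. In that version, characteristic $0$ enters through the linearity of additive characters rather than through Euler's identity.
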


\section{The associated bundle and the descent lemma}\label{sec:descent}

In this section, we construct the vector bundle $\mathcal E$ and prove the descent lemma identifying $\bigoplus_m H^0(A,\Sym^m\mathcal E)$ with the invariant ring \eqref{eq:invariant-ring}.

Throughout this section, $A$ is an abelian variety over $\mathbb C$ of dimension $4$, $U:=H^1(A,\mathcal O_A)^\vee$ regarded as a vector group, and $q\colon G:=E(A)\to A$ is the universal vector extension \eqref{eq:uve}.

\begin{lem}\label{lem:torsor}
With the notation above:
\begin{enumerate}
    \item $U\cong(\mathbb G_a)^4$ as algebraic groups over $\mathbb C$;
    \item $q\colon G\to A$ is a $U$-torsor for the right translation action of $U$ on $G$, and this torsor is Zariski-locally trivial;
    \item $\mathcal O(G)=\mathbb C$.
\end{enumerate}
\end{lem}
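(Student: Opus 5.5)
The three assertions are essentially structural facts about the universal vector extension, and I will prove them in the order stated, taking (3) directly from Theorem~\ref{thm:brion}.

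For (1), recall that $U=H^1(A,\mathcal O_A)^\vee$ is a finite-dimensional complex vector space viewed as a vector group. Since $\dim H^1(A,\mathcal O_A)=\dim A=4$ for an abelian variety, a choice of basis identifies $U$ with $(\mathbb G_a)^4$. Over a field of characteristic $0$ every vector group is of this form, so no further work is needed beyond the dimension count.

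For (2), the exact sequence \eqref{eq:uve} presents $G$ as an extension of algebraic groups $0\to U\to G\xrightarrow{q} A\to 0$ with $U$ central. It is central because conjugation gives a morphism $G\to\operatorname{Aut}(U)=GL(U)$ that factors through $G/U=A$, and a morphism from a proper connected variety to an affine group is constant. Consequently right translation by $U$ preserves the fibers of $q$, and the map $G\times U\to G\times_A G$, $(g,u)\mapsto(g,gu)$, is an isomorphism. To see this, note that it is bijective on points, and that $q$ is smooth with kernel $U$, since $q$ is a surjective homomorphism in characteristic $0$ and hence faithfully flat and smooth. Thus $q$ is an fppf (indeed smooth, hence étale-locally trivial) $U$-torsor. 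Zariski-local triviality then follows from Hilbert's Theorem~90 in its additive form: torsors under $\mathbb G_a$ are classified by $H^1_{\mathrm{et}}(A,\mathcal O_A)=H^1_{\mathrm{Zar}}(A,\mathcal O_A)$, and the same holds for $(\mathbb G_a)^4$ factorwise. Equivalently, $\mathbb G_a$ is a special group in Serre's sense. This comparison of étale and Zariski cohomology for the quasi-coherent sheaf $\mathcal O_A$ is the one step where some care is needed, and I expect it to be the main (mild) obstacle. Alternatively, one can cite the standard construction of $E(A)$ in \cite[Section~2.2]{Bri09}, where the extension is built from the class in $\operatorname{Ext}^1(A,\mathbb G_a)\cong H^1(A,\mathcal O_A)$ and is by construction locally trivial in the Zariski topology.

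For (3), this is exactly Theorem~\ref{thm:brion} applied to $A$ over $\mathbb C$: the group $G=E(A)$ is anti-affine, i.e.\ $\mathcal O(G)=\mathbb C$.
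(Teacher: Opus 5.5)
Your proposal is correct and follows essentially the same route as the paper: the dimension count for (1), the kernel/faithfully-flat torsor argument plus Serre's speciality of $(\mathbb G_a)^4$ for Zariski-local triviality in (2), which you phrase equivalently as additive Hilbert~90, $H^1_{\mathrm{et}}(A,\mathcal O_A)=H^1_{\mathrm{Zar}}(A,\mathcal O_A)$, and Brion's theorem for (3). Your centrality argument is harmless but unnecessary, since $q(gu)=q(g)$ holds for any $u\in\ker q$; also, the isomorphism $G\times U\cong G\times_AG$ is most cleanly justified by the explicit inverse $(g,h)\mapsto(g,g^{-1}h)$ rather than by bijectivity on points.
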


\begin{proof}
(1) Since $A$ is an abelian variety of dimension $4$, the vector space $H^1(A,\mathcal O_A)$ has dimension $4$ over $\mathbb C$, and a vector group of dimension $4$ over $\mathbb C$ is isomorphic to $(\mathbb G_a)^4$.

(2) The morphism $q$ is a surjective smooth homomorphism of algebraic groups with kernel $U$. Let $U$ act on $G$ by right translation, $g\cdot u:=gu$. Since $q(u)$ is the identity element of $A$ for $u\in U$, this action preserves the fibers of $q$. If $g_1,g_2$ lie in the same fiber, then $q(g_1^{-1}g_2)$ is the identity, so $g_1^{-1}g_2\in U$ and $g_2=g_1\cdot(g_1^{-1}g_2)$; if $g\cdot u=g$, then $u$ is the identity. Thus the action is free and transitive on each fiber, and since $q$ is faithfully flat, $q\colon G\to A$ is a $U$-torsor. For Zariski-local triviality, note that $\mathbb G_a$ is special in the sense of Serre, and extensions and products of special groups are special \cite[Section~4]{Ser58}; hence $U\cong(\mathbb G_a)^4$ is special, and every $U$-torsor over a variety is Zariski-locally trivial.

(3) This is Theorem~\ref{thm:brion} applied over $\mathbb C$.
\end{proof}

\begin{setup}\label{setup:associated-bundle}
Let $L$ be a finite-dimensional algebraic representation of $U$ over $\mathbb C$. We define the \emph{associated bundle}
\[
F_L:=G\times^UL:=(G\times L)/U,
\]
where $U$ acts on $G\times L$ by
\begin{equation}\label{eq:pinned-action}
u\cdot(g,l):=(g\cdot u^{-1},\,u\cdot l).
\end{equation}
By Lemma~\ref{lem:torsor}(2) we may choose a Zariski open cover $\{A_i\}$ of $A$ and sections $s_i\colon A_i\to G$ of $q$. Every point of $q^{-1}(A_i)$ is uniquely of the form $s_i(a)\cdot u$ with $a\in A_i$ and $u\in U$, and the class of $(s_i(a)\cdot u,\,l)$ in $F_L$ equals the class of $(s_i(a),\,u\cdot l)$. Thus $F_L$ is identified over $A_i$ with $A_i\times L$; the transition functions over $A_i\cap A_j$ are given by the action of $u_{ij}\colon A_i\cap A_j\to U$, $s_j(a)=s_i(a)\cdot u_{ij}(a)$, through the representation $L$. In particular, $F_L$ is an algebraic vector bundle on $A$ of rank $\dim_{\mathbb C}L$, and for representations $L,L'$ every $U$-equivariant linear map $L\to L'$ induces a morphism of vector bundles $F_L\to F_{L'}$.

We write $\mathcal E:=F_W=G\times^UW$, where $W$ is the $16$-dimensional representation \eqref{eq:dual-action} of $U\cong(\mathbb G_a)^4$; here we fix once and for all an isomorphism $U\cong(\mathbb G_a)^4$ as in Lemma~\ref{lem:torsor}(1) and regard $W$ as a representation of $U$ through it.
\end{setup}

\begin{prop}\label{prop:bundle}
The associated bundle $\mathcal E=G\times^UW$ of Set-up~\textup{\ref{setup:associated-bundle}} is an algebraic vector bundle of rank $16$ on $A$ which admits a filtration
\[
0=\mathcal E_0\subset\mathcal E_1\subset\dots\subset\mathcal E_{16}=\mathcal E
\]
by subbundles with $\mathcal E_i/\mathcal E_{i-1}\cong\mathcal O_A$ for $1\leq i\leq 16$. Consequently:
\begin{enumerate}
    \item $\det\mathcal E\cong\mathcal O_A$ and $c_i(\mathcal E)=0$ for every $i\geq 1$;
    \item $\mathcal E$ is numerically flat; in particular, $\mathcal E$ is nef.
\end{enumerate}
\end{prop}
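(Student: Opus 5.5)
The plan is to first build a $U$-stable full flag in $W$ whose successive quotients are trivial one-dimensional representations, and then push it through the associated-bundle construction of Set-up~\ref{setup:associated-bundle}.

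\textbf{The flag in $W$.} By \eqref{eq:dual-action}, the span of $x_1,\dots,x_8$ is fixed pointwise by $U$, and each $y_i$ is sent to $y_i$ plus an element of that span. Order the basis as $x_1,\dots,x_8,y_1,\dots,y_8$ and let $W_i$ be the span of the first $i$ vectors. Each $W_i$ is $U$-stable, and $U$ acts trivially on $W_i/W_{i-1}$.

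\textbf{Transport to $A$.} The construction $L\mapsto F_L$ sends the $U$-equivariant inclusions $W_{i-1}\subset W_i$ to morphisms of vector bundles $\mathcal E_{i-1}:=F_{W_{i-1}}\to\mathcal E_i:=F_{W_i}$. To see these are subbundle inclusions with the right quotients, work in the local trivializations over the cover $\{A_i\}$ from Lemma~\ref{lem:torsor}(2). There $F_L|_{A_j}\cong A_j\times L$, and the transition functions are $u_{jk}$ acting through $L$. Since the same cocycle acts compatibly on $W_{i-1}$, $W_i$ and $W_i/W_{i-1}$, the trivializations give a short exact sequence
\[
0\to F_{W_{i-1}}\to F_{W_i}\to F_{W_i/W_{i-1}}\to 0
\]
with locally split inclusion. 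So $\mathcal E_{i-1}$ is a subbundle of $\mathcal E_i$ with quotient $F_{W_i/W_{i-1}}$. For a trivial one-dimensional representation the transition functions are identically $1$, so $F_{W_i/W_{i-1}}\cong\mathcal O_A$. Rank $16$ is just $\dim W$.

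\textbf{Consequence (1).} The determinant is multiplicative in short exact sequences, so $\det\mathcal E\cong\bigotimes_i\mathcal O_A\cong\mathcal O_A$. The Whitney formula gives $c(\mathcal E)=\prod_i c(\mathcal O_A)=1$, so $c_i(\mathcal E)=0$ for every $i\geq 1$.

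\textbf{Consequence (2).} $\mathcal O_A$ is nef, since $\mathbb P_A(\mathcal O_A)=A$ with trivial $\mathcal O(1)$. By induction on $i$, using \cite[Theorem~6.2.12]{Laz04} on extensions, each $\mathcal E_i$ is nef. For the dual, dualizing the filtration gives a filtration of $\mathcal E^\vee$ by the subbundles $(\mathcal E/\mathcal E_{16-i})^\vee$, again with successive quotients $\mathcal O_A$. Equivalently, $\mathcal E^\vee=F_{V}$ with the dual flag. The same induction shows $\mathcal E^\vee$ is nef, so $\mathcal E$ is numerically flat.

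The only point needing care is that the associated-bundle construction is exact and sends trivial representations to trivial bundles. Both follow directly from the cocycle description, so I do not expect a genuine obstacle.
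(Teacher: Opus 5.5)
Your proposal is correct and follows essentially the same route as the paper: a $U$-stable full flag in $W$ with trivial one-dimensional graded pieces, pushed through the cocycle description of $F_L$, then multiplicativity of $\det$ and the Whitney formula for (1), and Lazarsfeld's extension criterion \cite[Theorem~6.2.12]{Laz04} applied to $\mathcal E$ and to $\mathcal E^\vee$ for (2). The only cosmetic difference is that the paper dualizes each sequence $0\to\mathcal E_{i-1}\to\mathcal E_i\to\mathcal O_A\to0$ to $0\to\mathcal O_A\to\mathcal E_i^\vee\to\mathcal E_{i-1}^\vee\to0$ and inducts, whereas you filter $\mathcal E^\vee$ by the annihilators $(\mathcal E/\mathcal E_{16-i})^\vee$; the two are equivalent.
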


\begin{proof}
That $\mathcal E$ is a rank-$16$ vector bundle is part of Set-up~\ref{setup:associated-bundle}. By \eqref{eq:dual-action}, the subspace $W':=\operatorname{span}(x_1,\dots,x_8)\subset W$ is fixed pointwise by $U$, and $U$ acts trivially on the quotient $W/W'$, since $u\cdot y_i\equiv y_i \pmod{W'}$. Refine the two-step filtration $0\subset W'\subset W$ to a full flag
\[
0=W_0\subset W_1\subset\dots\subset W_{16}=W
\]
with $\dim W_i=i$ and $W_8=W'$. Each $W_i$ is $U$-stable, and $U$ acts trivially on each one-dimensional quotient $W_i/W_{i-1}$.

Set $\mathcal E_i:=G\times^UW_i$. In the local trivializations of Set-up~\ref{setup:associated-bundle}, $\mathcal E_i$ is identified with $A_i\times W_i$, so $\mathcal E_i$ is a subbundle of $\mathcal E$, and $\mathcal E_i/\mathcal E_{i-1}\cong G\times^U(W_i/W_{i-1})$. Since $U$ acts trivially on $W_i/W_{i-1}$, the transition functions of this quotient bundle are trivial, and $\mathcal E_i/\mathcal E_{i-1}\cong\mathcal O_A$. This proves the existence of the filtration.

(1) Taking determinants along the filtration, we obtain $\det\mathcal E\cong\bigotimes_{i=1}^{16}\det(\mathcal E_i/\mathcal E_{i-1})\cong\mathcal O_A$. By the Whitney sum formula applied along the filtration, the total Chern class of $\mathcal E$ is $c(\mathcal E)=\prod_{i=1}^{16}c(\mathcal O_A)=1$, so $c_i(\mathcal E)=0$ for $i\geq 1$.

(2) The line bundle $\mathcal O_A$ is nef. Applying \cite[Theorem~6.2.12]{Laz04} inductively to the exact sequences $0\to\mathcal E_{i-1}\to\mathcal E_i\to\mathcal O_A\to 0$, we obtain that $\mathcal E$ is nef. Dualizing each of these sequences gives $0\to\mathcal O_A\to\mathcal E_i^\vee\to\mathcal E_{i-1}^\vee\to 0$, so $\mathcal E^\vee$ is also an iterated extension of copies of $\mathcal O_A$, and the same induction shows that $\mathcal E^\vee$ is nef. Thus $\mathcal E$ is numerically flat.
\end{proof}

We now prove the key descent lemma. It is here, and only here, that anti-affineness of $G$ is used.

\begin{prop}[Descent lemma]\label{prop:descent}
Let $L$ be a finite-dimensional algebraic representation of $U$ over $\mathbb C$ and let $F_L$ be the associated bundle of Set-up~\textup{\ref{setup:associated-bundle}}. Then:
\begin{enumerate}
    \item $H^0(A,F_L)$ is naturally identified with the set $\Mor^U(G,L)$ of morphisms $f\colon G\to L$ of varieties satisfying $f(g\cdot u)=u^{-1}\cdot f(g)$ for all points $g$ of $G$ and $u$ of $U$;
    \item every morphism from $G$ to an affine space is constant, and consequently
    \[
    H^0(A,F_L)\cong L^U:=\{l\in L\mid u\cdot l=l \text{ for all } u\in U\};
    \]
    \item $\Sym^m\mathcal E\cong F_{\Sym^mW}$ for every $m\geq 0$, where $\Sym^mW$ carries the induced representation;
    \item the resulting isomorphisms $H^0(A,\Sym^m\mathcal E)\cong(\Sym^mW)^U$ are compatible with the multiplication maps $\Sym^a\otimes\Sym^b\to\Sym^{a+b}$ on both sides. In particular, there is an isomorphism of graded $\mathbb C$-algebras
    \[
    \bigoplus_{m\geq 0}H^0(A,\Sym^m\mathcal E)\cong\bigoplus_{m\geq 0}(\Sym^mW)^U=\mathcal O(V)^U.
    \]
\end{enumerate}
\end{prop}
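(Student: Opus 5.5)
For (1), I will use the local description of $F_L$ from Set-up~\ref{setup:associated-bundle}. A section $\sigma\in H^0(A,F_L)$ is the same as a $U$-invariant morphism $G\to G\times L\to F_L$ lifting $q$. Concretely, given $\sigma$, for $g\in G$ there is a unique $l\in L$ with $\sigma(q(g))=[g,l]$; set $f(g):=l$. Since $[g\cdot u,\,u^{-1}\cdot l]=[g,l]$ by \eqref{eq:pinned-action}, we get $f(g\cdot u)=u^{-1}\cdot f(g)$.

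To see that $f$ is a morphism, I will work over each $A_i$. Write $\sigma|_{A_i}=[s_i(a),\sigma_i(a)]$ with $\sigma_i\colon A_i\to L$ regular. Then $f(s_i(a)\cdot u)=u^{-1}\cdot\sigma_i(a)$, and this is regular on $q^{-1}(A_i)\cong A_i\times U$. Conversely, an equivariant $f$ defines $\sigma_i(a):=f(s_i(a))$. These local sections glue because the transition functions are $u_{ij}$ acting through $L$: $f(s_j(a))=f(s_i(a)u_{ij}(a))=u_{ij}(a)^{-1}f(s_i(a))$, which matches the cocycle for $F_L$ once the conventions are fixed. The two constructions are mutually inverse and $\mathbb C$-linear, so they give the natural identification.

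For (2), I will use Lemma~\ref{lem:torsor}(3). Choose a basis of $L$. Each coordinate of a morphism $G\to L\cong\mathbb A^n$ is a regular function on $G$, hence constant, so $f\equiv l_0$ is constant. The equivariance condition then forces $l_0=u^{-1}l_0$ for all $u$, so $l_0\in L^U$. Conversely, every $l_0\in L^U$ gives a constant equivariant map. This yields $H^0(A,F_L)\cong L^U$, and under this isomorphism a section corresponds to its constant value.

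For (3), I will show that the associated-bundle construction commutes with $\Sym^m$. In the trivializations, $\Sym^m\mathcal E|_{A_i}=A_i\times\Sym^mW$, and its transition functions are $\Sym^m$ of those of $\mathcal E$. Since $\Sym^m$ of the transition functions is exactly the action of $u_{ij}$ through the induced representation $\Sym^mW$, the cocycles agree and the bundles are isomorphic. Intrinsically, the map $G\times\Sym^mW\to\Sym^m\mathcal E$, $(g,w_1\cdots w_m)\mapsto[g,w_1]\cdots[g,w_m]$, is $U$-invariant and induces the isomorphism.

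For (4), the multiplication $\Sym^a\mathcal E\otimes\Sym^b\mathcal E\to\Sym^{a+b}\mathcal E$ is induced, under (3), by the equivariant map $\Sym^aW\otimes\Sym^bW\to\Sym^{a+b}W$. The identification in (1) is functorial in $L$ and compatible with tensor products, since the equivariant map of the product section is the product of the equivariant maps. Evaluating the constant functions from (2) then shows that the isomorphisms respect multiplication. Combining with \eqref{eq:invariant-ring} gives the graded isomorphism.

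The main obstacle is bookkeeping rather than anything conceptual. The sign and inverse conventions in the gluing of (1) must line up with those in the transition functions of Set-up~\ref{setup:associated-bundle}, and the constant-function step in (2) must be applied correctly. Everything else follows from Lemma~\ref{lem:torsor}(3).
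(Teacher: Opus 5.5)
Your proposal is correct and follows essentially the same route as the paper: the canonical trivialization of $q^*F_L$ turns sections into $U$-equivariant maps $G\to L$, anti-affineness of $G$ forces these to be constant with value in $L^U$, $\Sym^m$ of the transition functions gives (3), and multiplying the constant values gives (4). The only cosmetic differences are that you glue local sections by an explicit cocycle check (correctly, $\sigma_j=u_{ij}^{-1}\sigma_i$) where the paper defines the section globally by $a\mapsto[(g,f(g))]$, and that your intrinsic $U$-invariant map $G\times\Sym^mW\to\Sym^m\mathcal E$ in (3) makes the multiplicativity in (4) more transparent than the paper's transition-function argument.
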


\begin{proof}
(1) Since $q$ is a $U$-torsor, the pullback $q^*F_L$ is canonically trivialized: over a point $g$ of $G$, every element of the fiber $(F_L)_{q(g)}$ is the class of $(g,l)$ for a unique $l\in L$, because $U$ acts freely and transitively on the fiber of $q$ through $g$. Let $s\in H^0(A,F_L)$. Composing $q^*s$ with this trivialization yields a morphism $f_s\colon G\to L$, characterized by
\[
s(q(g))=[(g,f_s(g))]\quad\text{for all }g,
\]
and $f_s$ is a morphism of varieties because locally $s(q(g))$ and the trivialization depend algebraically on $g$. For $u$ a point of $U$, the classes $[(g,f_s(g))]$ and $[(g\cdot u,f_s(g\cdot u))]$ both equal $s(q(g))$, and by \eqref{eq:pinned-action} we have $[(g\cdot u, u^{-1}\cdot f_s(g))]=[(g,f_s(g))]$; uniqueness of the $L$-coordinate over the fixed point $g\cdot u$ gives
\[
f_s(g\cdot u)=u^{-1}\cdot f_s(g).
\]
Thus $f_s\in\Mor^U(G,L)$.

Conversely, let $f\in\Mor^U(G,L)$. Define $s\colon A\to F_L$ by $s(a):=[(g,f(g))]$ for any point $g\in q^{-1}(a)$. This is well defined: if $g'=g\cdot u$ is another point of the fiber, then
\[
[(g',f(g'))]=[(g\cdot u,\,u^{-1}\cdot f(g))]=[(g,f(g))]
\]
by \eqref{eq:pinned-action}. It is a morphism because on $A_i$ we may take $g=s_i(a)$ with $s_i$ the local sections of Set-up~\ref{setup:associated-bundle}, so that $s(a)=[(s_i(a),f(s_i(a)))]$ is a composition of morphisms. The two constructions are mutually inverse, which proves (1).

(2) A morphism $f$ from $G$ to an affine space $\mathbb A^N$ is given by $N$ global regular functions on $G$. By Lemma~\ref{lem:torsor}(3), $\mathcal O(G)=\mathbb C$, so each coordinate of $f$ is constant, and $f$ is constant. Now let $f\in\Mor^U(G,L)$ be the constant morphism with value $l\in L$. The equivariance condition reads $l=u^{-1}\cdot l$ for all points $u$ of $U$, that is, $l\in L^U$. Combining with (1), we conclude that $H^0(A,F_L)\cong L^U$.

(3) In the local trivializations of Set-up~\ref{setup:associated-bundle}, $\mathcal E$ has transition functions $\rho(u_{ij})$, where $\rho\colon U\to\mathrm{GL}(W)$ is the representation. The bundle $\Sym^m\mathcal E$ then has transition functions $\Sym^m\rho(u_{ij})$, which are exactly the transition functions of $F_{\Sym^mW}$ for the induced representation of $U$ on $\Sym^mW$. Hence $\Sym^m\mathcal E\cong F_{\Sym^mW}$, compatibly with the trivializations.

(4) For $a,b\geq 0$, the multiplication $\Sym^aW\otimes\Sym^bW\to\Sym^{a+b}W$ is $U$-equivariant, hence induces a morphism of associated bundles which, under (3), is the multiplication $\Sym^a\mathcal E\otimes\Sym^b\mathcal E\to\Sym^{a+b}\mathcal E$. Under the identification of (1) and (2), a global section of $\Sym^a\mathcal E$ corresponds to a constant equivariant morphism with value $\alpha\in(\Sym^aW)^U$, and the product of the sections corresponding to $\alpha$ and $\beta$ corresponds to the constant morphism with value $\alpha\beta\in(\Sym^{a+b}W)^U$. Therefore the degreewise isomorphisms assemble into an isomorphism of graded $\mathbb C$-algebras, and the identification with $\mathcal O(V)^U$ is \eqref{eq:invariant-ring}.
\end{proof}

\begin{rem}\label{rem:atiyah}
Proposition~\ref{prop:descent} may be checked against a rank-two shadow from \cite{Ati57}. For an elliptic curve $A$, the universal vector extension is an extension of $A$ by $\mathbb G_a$, and for the two-dimensional representation $W_0$ of $\mathbb G_a$ dual to $(x,y)\mapsto(x,y+tx)$, the associated bundle $G\times^{\mathbb G_a}W_0$ is the nonsplit self-extension of $\mathcal O_A$ by $\mathcal O_A$, the Atiyah bundle $F_2$ (nonsplit, since a splitting would give $\dim_{\mathbb C}H^0(A,G\times^{\mathbb G_a}W_0)=2$, contradicting $\dim_{\mathbb C}W_0^{\mathbb G_a}=1$). Proposition~\ref{prop:descent} then gives $\dim_{\mathbb C}H^0(A,\Sym^mF_2)=\dim_{\mathbb C}(\Sym^mW_0)^{\mathbb G_a}=1$ for all $m\geq 0$, recovering the computation of \cite{Ati57}.
\end{rem}

\section{The projective bundle}\label{sec:projective-bundle}

In this section, we compute the geometry of $X=\mathbb P_A(\mathcal E)$ and verify the generalized klt condition.

\begin{prop}\label{prop:pe}
Let $A$ be an abelian variety of dimension $4$ over $\mathbb C$ and let $\mathcal E$ be a locally free sheaf of rank $16$ on $A$ such that $\mathcal E$ is nef and $\det\mathcal E\cong\mathcal O_A$. Let $X:=\mathbb P_A(\mathcal E)$, $\pi\colon X\to A$, and $\xi:=c_1(\mathcal O_X(1))$ as in Section~\textup{\ref{sec:preliminaries}}. Then:
\begin{enumerate}
    \item $X$ is a smooth projective variety of dimension $19$, and $\xi$ is a nef Cartier divisor class;
    \item $K_X\sim-16\xi$;
    \item there is an isomorphism of graded $\mathbb C$-algebras
    \[
    R(X,\xi)=\bigoplus_{m\geq 0}H^0(X,\mathcal O_X(m))\cong\bigoplus_{m\geq 0}H^0(A,\Sym^m\mathcal E).
    \]
\end{enumerate}
\end{prop}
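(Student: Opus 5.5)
The three assertions are standard facts about projective bundles, and I will check them in order using the conventions of Section~\ref{sec:preliminaries}.

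For (1), $\pi\colon X=\mathbb P_A(\mathcal E)\to A$ is Zariski-locally a product $A_i\times\mathbb P^{15}$, since $\mathcal E$ is locally free of rank $16$. So $\pi$ is smooth and projective of relative dimension $15$. As $A$ is smooth projective of dimension $4$, $X$ is smooth of dimension $19$. Projectivity of $X$ follows because $\mathcal O_X(1)\otimes\pi^*H^{\otimes k}$ is ample for $H$ ample on $A$ and $k\gg0$ (\cite[II, Proposition~7.10]{Har77}). Moreover $X$ is connected, hence a variety, because its fibres are connected and $A$ is connected. The divisor $\xi$ is Cartier since $\mathcal O_X(1)$ is a line bundle, and it is nef by the definition of nefness of $\mathcal E$ recalled in Section~\ref{sec:preliminaries}, together with the hypothesis that $\mathcal E$ is nef.

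For (2), the smooth morphism $\pi$ gives $\omega_X\cong\omega_{X/A}\otimes\pi^*\omega_A$, by taking determinants in $0\to\pi^*\Omega_A\to\Omega_X\to\Omega_{X/A}\to0$. Since $A$ is an abelian variety, $\omega_A\cong\mathcal O_A$. By \eqref{eq:relative-canonical} with $r=16$ and $\det\mathcal E\cong\mathcal O_A$, we get $\omega_X\cong\mathcal O_X(-16)$. Because $X$ is smooth, linear equivalence classes of divisors correspond to isomorphism classes of line bundles, so $K_X\sim-16\xi$.

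For (3), I would use $H^0(X,\mathcal O_X(m))=H^0(A,\pi_*\mathcal O_X(m))=H^0(A,\Sym^m\mathcal E)$, where the last equality is \eqref{eq:pushforward}. The only point needing care is compatibility with multiplication. The identification \eqref{eq:pushforward} comes from the canonical map of graded $\mathcal O_A$-algebras $\Sym\mathcal E\to\bigoplus_m\pi_*\mathcal O_X(m)$, which is an isomorphism in degrees $m\ge0$. It is multiplicative because on each affine $A_i$ trivializing $\mathcal E$ it is the identification of homogeneous polynomials in $16$ variables with sections of $\mathcal O(m)$ on $A_i\times\mathbb P^{15}$. Taking global sections of this isomorphism of sheaves of graded algebras gives the asserted isomorphism of graded $\mathbb C$-algebras. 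Finally, $\lfloor m\xi\rfloor=m\xi$ because $\xi$ is integral, so this ring is $R(X,\xi)$.

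I expect no serious obstacle. The most delicate step is the multiplicativity in (3), which I will handle by the local computation just described. I also need to remember that the choice of $\xi$ within its linear equivalence class does not matter, by Lemma~\ref{lem:linear-equivalence}.
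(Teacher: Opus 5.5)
Your proposal is correct and follows essentially the same route as the paper: local triviality of the $\mathbb P^{15}$-bundle for (1), the relative canonical formula \eqref{eq:relative-canonical} together with $\omega_A\cong\mathcal O_A$ for (2), and the pushforward identification \eqref{eq:pushforward} with compatibility of multiplication for (3). Your extra details (projectivity via $\mathcal O_X(1)\otimes\pi^*H^{\otimes k}$, connectedness, and the local check of multiplicativity) only make explicit what the paper leaves implicit.
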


\begin{proof}
(1) Since $\mathcal E$ is locally free of rank $16$, the morphism $\pi$ is a Zariski-locally trivial $\mathbb P^{15}$-bundle, so $X$ is smooth over $A$ of relative dimension $15$; as $A$ is smooth projective of dimension $4$ and $\pi$ is projective, $X$ is smooth projective of dimension $19$. The class $\xi$ is Cartier because $\mathcal O_X(1)$ is a line bundle, and it is nef because $\mathcal E$ is nef, by the definition in Section~\ref{sec:preliminaries}.

(2) Taking determinants in the relative Euler sequence \eqref{eq:euler} gives \eqref{eq:relative-canonical} with $r=16$:
\[
\omega_{X/A}\cong\det\bigl((\pi^*\mathcal E)\otimes\mathcal O_X(-1)\bigr)\cong\pi^*(\det\mathcal E)\otimes\mathcal O_X(-16)\cong\mathcal O_X(-16),
\]
where we used $\det(F\otimes L)\cong\det F\otimes L^{\otimes 16}$ for a rank-$16$ bundle $F$ and a line bundle $L$, together with $\det\mathcal E\cong\mathcal O_A$. Since $A$ is an abelian variety, $\Omega^1_A$ is trivialized by translation-invariant one-forms, so $\omega_A\cong\mathcal O_A$. As $\pi$ is smooth,
\[
\omega_X\cong\omega_{X/A}\otimes\pi^*\omega_A\cong\mathcal O_X(-16).
\]
Equivalently, $K_X\sim-16\xi$.

(3) By \eqref{eq:pushforward} and the projection to global sections, $H^0(X,\mathcal O_X(m))=H^0(A,\pi_*\mathcal O_X(m))=H^0(A,\Sym^m\mathcal E)$ for every $m\geq 0$. The multiplication $\mathcal O_X(a)\otimes\mathcal O_X(b)\to\mathcal O_X(a+b)$ pushes forward to the multiplication $\Sym^a\mathcal E\otimes\Sym^b\mathcal E\to\Sym^{a+b}\mathcal E$ of the graded $\mathcal O_A$-algebra $\Sym\mathcal E$, so the identifications are compatible with products.
\end{proof}

\begin{prop}\label{prop:gklt}
Notation and hypotheses as in Proposition~\textup{\ref{prop:pe}}. Let $B:=0$ and let $\Mm:=\overline{17\xi}$ be the closure of the nef Cartier divisor $17\xi$, as in Definition~\textup{\ref{defn:bdivisor}}. Then $(X,B,\Mm)$ is a projective klt generalized pair with
\[
K_X+B+\Mm_X\sim\xi.
\]
\end{prop}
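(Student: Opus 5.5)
The proof reduces to unwinding Definition~\ref{defn:gpair} in the simplest possible situation. The first task is to check that $(X,B,\Mm)$ is a generalized pair. By Proposition~\ref{prop:pe}(1), $X$ is a smooth, hence normal, projective variety. The boundary $B=0$ is trivially an effective $\mathbb Q$-divisor. Since $\xi$ is a nef Cartier divisor, so is $17\xi$. Its closure $\Mm=\overline{17\xi}$ is therefore b-nef and b-$\mathbb Q$-Cartier, and it descends on $X$ in the sense of Definition~\ref{defn:bdivisor}, with trace $\Mm_X=17\xi$. Finally, $K_X$ is Cartier because $X$ is smooth, so $K_X+B+\Mm_X=K_X+17\xi$ is Cartier, in particular $\mathbb Q$-Cartier.

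The linear equivalence follows directly from Proposition~\ref{prop:pe}(2). Since $K_X\sim-16\xi$, we get
\[
K_X+B+\Mm_X=K_X+17\xi\sim-16\xi+17\xi=\xi.
\]

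For the generalized klt condition, I will use a convenient log resolution. Because $X$ is smooth and $B=0$, the identity $g=\mathrm{id}_X\colon X\to X$ is a log resolution of $(X,B)$, and $\Mm$ descends on it. With this choice, $K_Y+B_Y+\Mm_Y=g^*(K_X+B+\Mm_X)$ yields $B_Y=0$, whose coefficients are all $<1$. Definition~\ref{defn:gpair} records that the klt condition is independent of the chosen resolution \cite[Section~4]{BZ16}, so this single check suffices.

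For a more robust argument that avoids relying on the identity as a resolution, take any log resolution $g\colon Y\to X$. Then $\Mm_Y=g^*\Mm_X$ since $\Mm$ descends on $X$. It follows that $B_Y=-(K_Y-g^*K_X)$, which is minus the relative canonical divisor of a birational morphism between smooth varieties. That divisor is effective with positive coefficients on the exceptional divisors, so every coefficient of $B_Y$ is $\le0<1$.

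I do not expect a genuine obstacle. The only point that needs a moment's care is applying the independence-of-resolution statement correctly, and the direct computation with an arbitrary resolution covers that point in any case.
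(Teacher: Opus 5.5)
Your proposal is correct and follows the paper's proof: you verify the generalized pair axioms the same way, obtain $K_X+17\xi\sim\xi$ from Proposition~\ref{prop:pe}(2), and prove klt by taking an arbitrary log resolution, where $B_Y=-(K_Y-g^*K_X)\le 0$. Your additional shortcut with $g=\mathrm{id}_X$ plus independence of the resolution is also valid, though the paper does not use it.
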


\begin{proof}
The variety $X$ is smooth projective, hence normal, and $B=0$ is effective. Since $\xi$ is nef Cartier, $17\xi$ is nef Cartier, so $\Mm=\overline{17\xi}$ is a b-nef b-$\mathbb Q$-Cartier b-divisor which descends on $X$, with trace $\Mm_X=17\xi$; b-nefness holds because pullbacks of nef Cartier divisors under projective morphisms are nef. By Proposition~\ref{prop:pe}(2),
\[
K_X+B+\Mm_X=K_X+17\xi\sim-16\xi+17\xi=\xi,
\]
which is Cartier; in particular $K_X+B+\Mm_X$ is $\mathbb Q$-Cartier, and $(X,B,\Mm)$ is a generalized pair.

For the generalized klt condition, let $g\colon Y\to X$ be any log resolution on which $\Mm$ descends; since $X$ is smooth and $B=0$, $Y$ is smooth and $g$ is a projective birational morphism. By the definition of the closure, $\Mm_Y=g^*\Mm_X$, so writing $K_Y+B_Y+\Mm_Y=g^*(K_X+B+\Mm_X)$ and cancelling $\Mm_Y=g^*\Mm_X$ we obtain
\[
B_Y=-(K_Y-g^*K_X).
\]
Since $Y$ and $X$ are smooth and $g$ is birational, $K_Y-g^*K_X$ is effective: it is the divisor of the Jacobian determinant of $g$, a regular section of $\omega_Y\otimes g^*\omega_X^{-1}$ which is nonzero on the locus where $g$ is an isomorphism. Hence every coefficient of $B_Y$ is $\leq 0<1$, and $(X,B,\Mm)$ is generalized klt.
\end{proof}

\section{Proof of the main theorem}\label{sec:proof}

In this section, we prove Theorem~\ref{thm:main}, and then compute the numerical and Iitaka dimensions of the example. We first fix the data of the example.

\begin{setup}\label{setup:example}
Let $A$ be an abelian variety of dimension $4$ over $\mathbb C$, for example the product of four elliptic curves. Let $U=H^1(A,\mathcal O_A)^\vee$, let $q\colon G=E(A)\to A$ be the universal vector extension, and fix an isomorphism $U\cong(\mathbb G_a)^4$ (Lemma~\ref{lem:torsor}(1)). Let $W$ be Totaro's $16$-dimensional representation \eqref{eq:dual-action}, regarded as a representation of $U$ through this isomorphism, and let $\mathcal E=G\times^UW$ be the associated bundle of Set-up~\ref{setup:associated-bundle}. Let $X:=\mathbb P_A(\mathcal E)$ with projection $\pi\colon X\to A$ and $\xi:=c_1(\mathcal O_X(1))$, and set
\[
B:=0,\qquad \Mm:=\overline{17\xi},\qquad S:=\mathcal O(V)^U=\bigoplus_{m\geq 0}(\Sym^mW)^U.
\]
\end{setup}

\begin{lem}\label{lem:example-ring}
In Set-up~\textup{\ref{setup:example}}, the triple $(X,B,\Mm)$ is a projective klt generalized pair such that $X$ is smooth projective of dimension $19$, $\Mm$ descends on $X$ with $\Mm_X=17\xi$ nef Cartier, and $K_X+B+\Mm_X\sim\xi$. Moreover, there are isomorphisms of graded $\mathbb C$-algebras
\[
R(X,K_X+B+\Mm_X)\cong R(X,\xi)\cong S.
\]
\end{lem}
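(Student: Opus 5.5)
The proof assembles the results of Sections~\ref{sec:descent} and~\ref{sec:projective-bundle}; no new input is needed. First, I will check that the bundle $\mathcal E=G\times^UW$ of Set-up~\ref{setup:example} satisfies the hypotheses of Proposition~\ref{prop:pe}. It is locally free of rank $16$ on the $4$-dimensional abelian variety $A$ by Set-up~\ref{setup:associated-bundle}. By Proposition~\ref{prop:bundle}, $\det\mathcal E\cong\mathcal O_A$ and $\mathcal E$ is nef. Hence Proposition~\ref{prop:pe} applies: $X$ is smooth projective of dimension $19$, $\xi$ is nef Cartier, $K_X\sim-16\xi$, and $R(X,\xi)\cong\bigoplus_{m\ge0}H^0(A,\Sym^m\mathcal E)$ as graded algebras.

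Next, I will apply Proposition~\ref{prop:gklt} with the same hypotheses. It shows that $(X,0,\overline{17\xi})$ is a projective klt generalized pair, that $\Mm$ descends on $X$ with trace $\Mm_X=17\xi$, which is nef Cartier, and that $K_X+B+\Mm_X\sim\xi$. Since $K_X+B+\Mm_X$ and $\xi$ are both Cartier, hence $\mathbb Q$-Cartier, and are linearly equivalent, Lemma~\ref{lem:linear-equivalence} gives a graded isomorphism $R(X,K_X+B+\Mm_X)\cong R(X,\xi)$.

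Finally, I will compose the isomorphism of Proposition~\ref{prop:pe}(3) with Proposition~\ref{prop:descent}(4), which identifies $\bigoplus_m H^0(A,\Sym^m\mathcal E)$ with $\bigoplus_m(\Sym^mW)^U=S$ as graded $\mathbb C$-algebras. This yields $R(X,K_X+B+\Mm_X)\cong R(X,\xi)\cong S$.

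The only point that needs care is compatibility of the multiplications along the chain of isomorphisms. Proposition~\ref{prop:pe}(3) matches the product on $R(X,\xi)$ with the multiplication of the $\mathcal O_A$-algebra $\Sym\mathcal E$. Proposition~\ref{prop:descent}(4) matches that multiplication with the one on $S$, using $\Sym^m\mathcal E\cong F_{\Sym^mW}$ from Proposition~\ref{prop:descent}(3). Both statements are already proved as graded-algebra statements, so the composite is a graded-algebra isomorphism.
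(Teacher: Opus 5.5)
Your proposal is correct and follows the paper's own proof essentially step for step. Proposition~\ref{prop:bundle} supplies the hypotheses (nef, rank $16$, trivial determinant) for Propositions~\ref{prop:pe} and~\ref{prop:gklt}, and Lemma~\ref{lem:linear-equivalence} gives $R(X,K_X+B+\Mm_X)\cong R(X,\xi)$; composing Proposition~\ref{prop:pe}(3) with Proposition~\ref{prop:descent}(4) then identifies this graded algebra with $S$.
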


\begin{proof}
By Proposition~\ref{prop:bundle}, $\mathcal E$ is a nef rank-$16$ bundle with $\det\mathcal E\cong\mathcal O_A$, so Propositions~\ref{prop:pe} and~\ref{prop:gklt} apply: $(X,B,\Mm)$ is a projective klt generalized pair, $X$ is smooth projective of dimension $19$, $\xi$ and hence $\Mm_X=17\xi$ are nef Cartier, $\Mm=\overline{17\xi}$ descends on $X$, and $K_X+B+\Mm_X\sim\xi$.

Since $K_X+B+\Mm_X\sim\xi$, Lemma~\ref{lem:linear-equivalence} gives $R(X,K_X+B+\Mm_X)\cong R(X,\xi)$ as graded $\mathbb C$-algebras. By Proposition~\ref{prop:pe}(3) and Proposition~\ref{prop:descent}(4),
\[
R(X,\xi)\cong\bigoplus_{m\geq 0}H^0(A,\Sym^m\mathcal E)\cong\bigoplus_{m\geq 0}(\Sym^mW)^U=S. \qedhere
\]
\end{proof}

\begin{proof}[Proof of Theorem~\textup{\ref{thm:main}}(1)(2)]
Adopt Set-up~\ref{setup:example}. Part (1) is contained in Lemma~\ref{lem:example-ring}. For part (2), Lemma~\ref{lem:example-ring} identifies $R(X,K_X+B+\Mm_X)$ with $S=\mathcal O(V)^U$ as graded $\mathbb C$-algebras, and $\mathcal O(V)^U$ is not finitely generated over $\mathbb C$ by Theorem~\ref{thm:totaro}. Since finite generation of a graded $\mathbb C$-algebra is invariant under isomorphism, $R(X,K_X+B+\Mm_X)$ is not a finitely generated $\mathbb C$-algebra.
\end{proof}

It remains to prove part (3) of Theorem~\ref{thm:main}. Recall that for a nef Cartier divisor class $\xi$ on a smooth projective variety $X$ of dimension $n$, the \emph{numerical dimension} is
\[
\nu(X,\xi):=\max\{k\in\{0,1,\dots,n\}\mid \xi^k\not\equiv 0\},
\]
where $\equiv$ denotes numerical equivalence of cycle classes, and $\xi$ is big if and only if $\nu(X,\xi)=n$ \cite[Theorem~2.2.16]{Laz04}. We say that a nef Cartier divisor class $\xi$ is \emph{abundant} if $\kappa(X,\xi)=\nu(X,\xi)$, where $\kappa$ denotes the Iitaka dimension recalled before Lemma~\ref{lem:trdeg} below.

\begin{prop}\label{prop:nu}
In Set-up~\textup{\ref{setup:example}}, $\nu(X,\xi)=15$. In particular, $\xi$ is not big.
\end{prop}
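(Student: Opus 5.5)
We prove that $\xi^{15}\not\equiv 0$ and $\xi^{16}\equiv 0$ on $X$, using the Chern class computation of Proposition~\ref{prop:bundle}(1). The tool is the Grothendieck relation in the Chow ring (or cohomology ring) of $X=\mathbb P_A(\mathcal E)$. With the convention $\mathbb P_A(\mathcal E)=\Proj_A(\Sym\mathcal E)$, the tautological class satisfies
\[
\sum_{i=0}^{16}(-1)^i\pi^*c_i(\mathcal E)\,\xi^{16-i}=0 .
\]
The signs depend on the convention, but they are irrelevant here. By Proposition~\ref{prop:bundle}(1) we have $c_i(\mathcal E)=0$ for all $i\geq 1$, so the relation collapses to $\xi^{16}=0$ as a cycle class. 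Hence $\xi^k\equiv 0$ for every $k\geq 16$, and $\nu(X,\xi)\leq 15$.

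For the lower bound, we exhibit a curve or cycle class that pairs nontrivially with $\xi^{15}$. Let $h$ be an ample divisor class on $A$ (for example a product polarization when $A$ is a product of elliptic curves), so that $h^4>0$ on $A$. Consider the $19$-dimensional intersection number $\xi^{15}\cdot\pi^*h^4$. Since $\pi^*h^4$ is a positive multiple of the class of a fiber $F\cong\mathbb P^{15}$, this number equals $(h^4)\cdot(\xi|_F)^{15}$. The restriction $\xi|_F$ is the hyperplane class $\mathcal O_{\mathbb P^{15}}(1)$, so $(\xi|_F)^{15}=1$. Equivalently, $\pi_*(\xi^{15})=[A]$ by the projective bundle formula $\pi_*\xi^{r-1+j}=s_j(\mathcal E)$. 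Therefore $\xi^{15}\cdot\pi^*h^4=h^4>0$, so $\xi^{15}\not\equiv 0$ and $\nu(X,\xi)=15$.

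Finally, $\dim X=19>15$, so by the criterion $\xi$ is big if and only if $\nu(X,\xi)=19$ (\cite[Theorem~2.2.16]{Laz04}), and $\xi$ is not big.

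The only point requiring care is the upper bound: we must make sure that $\xi^{16}$ vanishes numerically, not merely in a weaker sense. The Grothendieck relation holds in the Chow ring $A^*(X)$, which is a free $A^*(A)$-module on $1,\xi,\dots,\xi^{15}$. Because $c_i(\mathcal E)=0$ there, coming from the filtration by copies of $\mathcal O_A$ via the Whitney formula, $\xi^{16}=0$ holds already rationally, and in particular numerically.
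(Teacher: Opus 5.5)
Your proposal is correct and is essentially the paper's proof: the Grothendieck relation together with $c_i(\mathcal E)=0$ forces $\xi^{16}=0$, and $\pi_*(\xi^{15})=[A]$ combined with the projection formula gives $\xi^{15}\cdot\pi^*h^4=h^4>0$. One small caveat: $\pi^*h^4$ is a multiple of a fiber class only up to numerical equivalence (for instance, represent $h^4$ by an effective zero-cycle supported at general points), but that is all the intersection-number argument needs.
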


\begin{proof}
By the Grothendieck relation defining Chern classes \cite[Appendix~A, Section~3]{Har77},
\[
\sum_{i=0}^{16}(-1)^i\,\pi^*c_i(\mathcal E)\cdot\xi^{16-i}=0
\]
in the Chow ring of $X$. By Proposition~\ref{prop:bundle}(1), $c_i(\mathcal E)=0$ for $i\geq 1$, so the relation reads $\xi^{16}=0$. Hence $\xi^k=0$ for all $k\geq 16$, and $\nu(X,\xi)\leq 15$.

For the reverse inequality, let $H$ be an ample divisor on $A$. The restriction of $\mathcal O_X(1)$ to any fiber of $\pi$ is $\mathcal O_{\mathbb P^{15}}(1)$, so $\pi_*(\xi^{15})=[A]$, and by the projection formula
\[
\xi^{15}\cdot(\pi^*H)^4=\pi_*(\xi^{15})\cdot H^4=H^4>0.
\]
Thus $\xi^{15}\not\equiv 0$, and $\nu(X,\xi)=15<19=\dim X$, so $\xi$ is not big.
\end{proof}

We compute the Iitaka dimension $\kappa(X,\xi)$, defined as the maximum over all $m\geq 1$ with $H^0(X,\mathcal O_X(m\xi))\neq 0$ of the dimension of the image of the rational map defined by the linear system $|m\xi|$; see \cite[Section~2.1.A]{Laz04}.

We first record the transcendence degree of the section ring. Write
\[
\mathbb C[V]=\mathbb C[x_1,\dots,x_8,y_1,\dots,y_8],
\]
so that $S=\mathbb C[V]^U$ in Set-up~\ref{setup:example}, and write $\mathbb C(V)$ for the fraction field of $\mathbb C[V]$. Let $R\subset\mathbb C^8$ denote the row space of the $4\times 8$ matrix of Theorem~\ref{thm:totaro}, so that the vector $t=(t_1,\dots,t_8)$ associated with $u\in U$ ranges over $R$. For $c=(c_1,\dots,c_8)\in\mathbb C^8$, set
\begin{equation}\label{eq:Fc}
F_c:=\sum_{i=1}^8c_i\,y_i\prod_{j\neq i}x_j\in\mathbb C[V],
\end{equation}
and define the four vectors
\begin{equation}\label{eq:cvectors}
c^{(1)}=(1,-1,-1,1,0,0,0,0),\quad
c^{(2)}=(1,-1,0,0,-1,1,0,0),\quad
c^{(3)}=(1,0,-1,0,-1,0,1,0),
\end{equation}
and $c^{(4)}=(1,-1,-1,1,-1,1,1,-1)$, each of which is orthogonal to all four rows of the matrix of Theorem~\ref{thm:totaro} under the standard bilinear form on $\mathbb C^8$.

\begin{lem}\label{lem:trdeg}
With the notation above:
\begin{enumerate}
    \item $F_c\in S$ for every $c\in R^\perp$, and the twelve invariants
    \[
    x_1,\dots,x_8,\ F_{c^{(1)}},\ F_{c^{(2)}},\ F_{c^{(3)}},\ F_{c^{(4)}}
    \]
    are algebraically independent over $\mathbb C$;
    \item $\Frac(S)=\mathbb C(V)^U$ and $\operatorname{trdeg}_{\mathbb C}\Frac(S)=12$.
\end{enumerate}
\end{lem}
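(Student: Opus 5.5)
Part (1) comes first. Since $u$ fixes each $x_j$ and sends $y_i\mapsto y_i-t_ix_i$ by \eqref{eq:dual-action}, we get
\[
u\cdot F_c=F_c-\Bigl(\sum_{i=1}^8c_it_i\Bigr)\prod_{j=1}^8x_j .
\]
This equals $F_c$ exactly when $\langle c,t\rangle=0$. Hence $F_c\in S$ for every $c\in R^\perp$. Checking that the four $c^{(k)}$ lie in $R^\perp$ is a direct dot-product computation.

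For algebraic independence, I will use the Jacobian criterion in characteristic $0$. It suffices to find one point where the $12\times 16$ Jacobian of these polynomials has rank $12$. The partial derivatives of the $x_j$ give the identity in the $x$-columns. In the $y$-columns, the Jacobian of $F_{c^{(k)}}$ at a point with all $x_j=1$ is the matrix with rows $c^{(k)}$. So it is enough that $c^{(1)},\dots,c^{(4)}$ are linearly independent. This holds: looking at coordinates $4,6,7$ already separates $c^{(1)},c^{(2)},c^{(3)}$, and $c^{(4)}$ has coordinate $8$ nonzero while the others do not. The total rank is then $8+4=12$.

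For part (2), I first show $\Frac(S)=\mathbb C(V)^U$. The inclusion $\subseteq$ is clear. For $\supseteq$, take $f=p/q\in\mathbb C(V)^U$ in lowest terms. Since $U$ is connected unipotent and $\mathbb C[V]$ is a UFD with only constants as units, $u\cdot q=\chi(u)q$ for a character $\chi$ of $U$. But $U\cong(\mathbb G_a)^4$ has no nontrivial characters, so $q\in S$ and then $p=fq\in S$. I expect the unit-and-character argument to be the only delicate step. Concretely, $u\cdot p\,q=p\,u\cdot q$ together with coprimality forces $u\cdot q=\lambda(u)q$, and $\lambda$ is a morphism $U\to\mathbb G_m$, hence constant.

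Next I compute the transcendence degree. Part (1) gives $\operatorname{trdeg}\ge 12$. For the upper bound, consider $\mathbb C(V)^U\supseteq\mathbb C(x_1,\dots,x_8)$. Over the generic point, where all $x_i\neq 0$, $U$ acts freely on the $y$-coordinates by translation through the injective map $u\mapsto(t_ix_i)_i$; injectivity holds because the matrix has rank $4$. The generic orbits therefore have dimension $4$. By Rosenlicht's theorem, $\operatorname{trdeg}\mathbb C(V)^U=16-4=12$.

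Alternatively, I can avoid Rosenlicht. Write $y_i=x_iz_i$. Then $U$ acts on $z$ by $z\mapsto z-t$, $t\in R$. Over $K=\mathbb C(x)$, the invariant subfield of $K(z_1,\dots,z_8)$ under translations by the $4$-dimensional subspace $R$ is $K(\ell_1(z),\dots,\ell_4(z))$, where the $\ell_k$ are linear forms spanning $R^\perp$. This follows by a linear change of coordinates and the one-variable fact $K(w)^{\mathbb G_a}=K$. Its transcendence degree is $8+4=12$. In fact $F_{c^{(k)}}=\prod_j x_j\cdot\ell_k(z)$, which also recovers the generators.
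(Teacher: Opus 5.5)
Your proof is correct, but for the upper bound $\operatorname{trdeg}_{\mathbb C}\mathbb C(V)^U\leq 12$ you take a different route from the paper.

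The rest coincides with the paper's Steps 1 and 2: the invariance computation for $F_c$, the Jacobian at $x_1=\dots=x_8=1$ with independence of the $c^{(k)}$ read off from coordinates $4,6,7,8$, and the coprimality argument with units on $U\cong\mathbb A^4$ for $\Frac(S)=\mathbb C(V)^U$.

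The paper's proof deliberately avoids Rosenlicht's theorem. It supposes there are thirteen algebraically independent invariant rational functions. The rational map $V\dashrightarrow\mathbb A^{13}$ they define would then have $3$-dimensional general fibers. Yet each such fiber contains a dense open piece of a $4$-dimensional free $U$-orbit, a contradiction. Rosenlicht's theorem, your first route, appears in the paper only as a shortcut in Remark~\ref{rem:rosenlicht}.

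Your second route is genuinely different and proves more. You write $y_i=x_iz_i$, so that over $K=\mathbb C(x_1,\dots,x_8)$ the group acts by translations $z\mapsto z-t$ with $t\in R$. This computes $\mathbb C(V)^U=K(\ell_1(z),\dots,\ell_4(z))$ exactly. Since $F_{c^{(k)}}=x_1\cdots x_8\,\ell_k(z)$ and $c^{(1)},\dots,c^{(4)}$ form a basis of the $4$-dimensional space $R^\perp$, it follows that $\mathbb C(V)^U=\mathbb C(x_1,\dots,x_8,F_{c^{(1)}},\dots,F_{c^{(4)}})$. So the field is purely transcendental and generated by the twelve invariants of part (1), not merely algebraic over them. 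This also recovers their algebraic independence without the Jacobian.

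The trade-off is generality. Your route exploits the special Nagata shape of the action, which becomes a pure translation once the $x_i$ are inverted. The paper's fiber-dimension argument uses only generic freeness, so it applies to any generically free action of a $4$-dimensional group.

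One step should be written out: passing from the one-variable fact $M(w)^{\mathbb G_a}=M$ to four variables. Choose constant-coefficient linear coordinates $w_5,\dots,w_8$ whose restrictions to $R$ are coordinates on $R$, then induct one variable at a time.
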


\begin{proof}
\textit{Step 1: $\Frac(S)=\mathbb C(V)^U$.}
The inclusion $\Frac(S)\subset\mathbb C(V)^U$ holds because $U$ acts on $\mathbb C(V)$ by field automorphisms fixing $S$. Conversely, let $h\in\mathbb C(V)^U$ and write $h=f/g$ with $f,g\in\mathbb C[V]$ coprime. For a point $u$ of $U$, invariance gives $(u\cdot f)g=f(u\cdot g)$. Since $u$ acts by an algebra automorphism, $u\cdot f$ and $u\cdot g$ are again coprime, so $u\cdot f=c(u)f$ and $u\cdot g=c(u)g$ for some $c(u)\in\mathbb C^*$. The map $u\mapsto c(u)$ is a morphism $U\to\mathbb G_m$: writing $u\cdot f$ in coordinates, $c(u)$ is a ratio of matrix coefficients of the action, which are polynomial in $u$. Moreover $c(0)=1$. Since $U\cong\mathbb A^4$ as a variety, every invertible regular function on $U$ is a nonzero constant, so $c\equiv 1$. Thus $f,g\in S$ and $h\in\Frac(S)$.

\medskip

\noindent\textit{Step 2: part (1), and $\operatorname{trdeg}_{\mathbb C}\mathbb C(V)^U\geq 12$.}
Let $c\in R^\perp$. By \eqref{eq:dual-action} and \eqref{eq:Fc},
\[
u\cdot F_c=\sum_{i=1}^8c_i\,(y_i-t_ix_i)\prod_{j\neq i}x_j=F_c-\Bigl(\sum_{i=1}^8c_it_i\Bigr)x_1\cdots x_8=F_c,
\]
since $t\in R$ and $c\in R^\perp$; so $F_c\in S$. A direct check shows that $c^{(1)},\dots,c^{(4)}$ of \eqref{eq:cvectors} lie in $R^\perp$ and are linearly independent, as the submatrix formed by their coordinates $4,6,7,8$ has rank $4$. To see that the twelve invariants of part (1) are algebraically independent over $\mathbb C$, it suffices to check that their Jacobian matrix with respect to $(x_1,\dots,x_8,y_1,\dots,y_8)$ has rank $12$ at some point. At a point where $x_1=\dots=x_8=1$, the Jacobian contains the identity block $\partial x_i/\partial x_j=\delta_{ij}$ and the block $\partial F_{c^{(k)}}/\partial y_i=c^{(k)}_i$, which has rank $4$; hence the rank is $12$. This proves part (1), and $\operatorname{trdeg}_{\mathbb C}\mathbb C(V)^U\geq 12$ follows from Step 1.

\medskip

\noindent\textit{Step 3: $\operatorname{trdeg}_{\mathbb C}\mathbb C(V)^U\leq 12$.}
The action of $U$ on $V$ is generically free: if $u$ fixes a point $v$ with all coordinates $x_i(v)\neq 0$, then $t_ix_i(v)=0$ for all $i$, so $t=0$, and since the four rows of the matrix are linearly independent, $u=0$. Hence the orbit of such a point $v$, being the image of the injective orbit morphism $U\to V$, has dimension $4$.

Suppose that $h_1,\dots,h_{13}\in\mathbb C(V)^U$ were algebraically independent. Let $\Phi\colon V\dashrightarrow\mathbb A^{13}$ be the rational map they define and let $Z$ be the closure of its image, so that $\mathbb C(Z)\supset\mathbb C(h_1,\dots,h_{13})$ and $\dim Z\geq 13$; as $\dim Z=\operatorname{trdeg}_{\mathbb C}\mathbb C(Z)$ and $\mathbb C(Z)$ is generated by the $h_i$, we have $\dim Z=13$. By the fiber dimension theorem, the fiber of $\Phi$ through a general point $v\in V$ has dimension $16-13=3$. But each $h_i$ is constant on $U$-orbits, so this fiber contains $U\cdot v\cap\operatorname{dom}(\Phi)$; for general $v$ the set $\{u\in U\mid u\cdot v\in\operatorname{dom}(\Phi)\}$ is a dense open subset of $U$, so $U\cdot v\cap\operatorname{dom}(\Phi)$ has dimension $4$. This is a contradiction. Hence $\operatorname{trdeg}_{\mathbb C}\mathbb C(V)^U\leq 12$, and combining the three steps completes the proof of part (2).
\end{proof}

\begin{prop}\label{prop:kappa}
In Set-up~\textup{\ref{setup:example}}, $\kappa(X,\xi)=11$.
\end{prop}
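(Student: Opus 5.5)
Our approach is to compute $\kappa(X,\xi)$ from the section ring $R(X,\xi)\cong S$ given by Lemma~\ref{lem:example-ring}. We use the standard fact, valid for any Cartier divisor $D$ on a normal projective variety with $R(X,D)\neq\mathbb C$, that
\[
\kappa(X,D)=\operatorname{trdeg}_{\mathbb C}\Frac\bigl(R(X,D)\bigr)-1.
\]
This holds whether or not $R(X,D)$ is finitely generated. Its proof runs as follows. Choose $m$ with $H^0(X,\mathcal O_X(mD))\neq 0$ and $\dim\phi_{|mD|}(X)=\kappa$, with $m$ divisible enough that this dimension is the maximum. The function field of the image of $\phi_{|mD|}$ is generated by ratios $s/s_0$ of sections of degree $m$. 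For $m$ sufficiently divisible, every degree-zero ratio $f/g$ of homogeneous elements of $R(X,D)$ can be written with numerator and denominator in degree $m$, after multiplying both by a suitable power of $g$. These degree-zero ratios form a field $K$ of transcendence degree $\kappa$. Adjoining one element of positive degree makes $\Frac(R)$ algebraic over $K(s_0)$ with $s_0$ transcendental, which gives the shift by $1$. We will state this with a reference, such as \cite[Section~2.1]{Laz04}, or sketch the argument above, being careful that Iitaka dimension is attained as $\max_m$, which equals the limit along sufficiently divisible $m$.

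With this fact in hand, the computation is immediate. Lemma~\ref{lem:trdeg}(2) gives $\operatorname{trdeg}_{\mathbb C}\Frac(S)=12$. Since $\Frac(R(X,\xi))\cong\Frac(S)$ as fields, via the graded isomorphism of Lemma~\ref{lem:example-ring}, we obtain $\kappa(X,\xi)=12-1=11$. It remains to note that $S\neq\mathbb C$; this holds because $x_1\in S$ has degree $1$, so $H^0(X,\mathcal O_X(\xi))\neq 0$ and $\kappa\geq 0$.

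The main obstacle is a clean justification of the relation between $\kappa$ and the transcendence degree for a ring that is not finitely generated. Without finite generation we cannot pass to $\Proj R$. We will handle this by working entirely with the degree-zero subfield $K\subset\Frac(R)$, which avoids any use of $\Proj$. The key substep is that $K$ coincides with $\mathbb C(\phi_{|mD|}(X))$ for $m$ divisible enough. The inclusion $\supseteq$ is clear. For $\subseteq$, the subfields $\mathbb C(\phi_{|mD|}(X))$ increase under divisibility, so $K$ is their union. Moreover, $K$ is finitely generated as a subfield of $\mathbb C(X)$, so it equals one of them. Combined with Proposition~\ref{prop:nu}, this yields $\kappa=11<15=\nu$, which gives Theorem~\ref{thm:main}(3).
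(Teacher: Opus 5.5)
Your proof is correct. It shares two of its three steps with the paper's proof.
\begin{itemize}
\item The paper also writes $\Frac(S)=Q_0(x_1)$ with $x_1$ transcendental over the degree-zero subfield $Q_0$ (your $K$), which gives $\trdeg_{\mathbb C}Q_0=11$.
\item The paper also gets $\kappa(X,\xi)\leq 11$ from $\mathbb C(Z_m)\subset Q_0$, where $Z_m$ is the closure of the image of $\varphi_{|m\xi|}$.
\end{itemize}

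You differ in the lower bound. The paper fixes $m=8$ and exhibits eleven explicit ratios $x_i/x_1$ ($2\le i\le 8$) and $F_{c^{(k)}}/x_1^8$ ($1\le k\le 4$) in $\mathbb C(Z_8)$. Their independence comes from the explicit invariants of Lemma~\ref{lem:trdeg}(1). You instead argue abstractly:
\begin{itemize}
\item $Q_0$ is an intermediate field of the finitely generated extension $\mathbb C(X)/\mathbb C$, hence is itself finitely generated;
\item $Q_0$ is the union of the fields $\mathbb C(Z_m)$, which form a directed system under divisibility;
\item so $Q_0=\mathbb C(Z_M)$ for some $M$.
\end{itemize}

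\textbf{What your route buys.} It proves the general identity $\kappa(X,D)=\trdeg_{\mathbb C}\Frac R(X,D)-1$ with no finite-generation hypothesis on $R(X,D)$. This is precisely the ``$D$-dimension identity'' that Remark~\ref{rem:rosenlicht} cites, and you supply a proof of it. It also needs only the number $12$. Note, however, that the paper's proof of Lemma~\ref{lem:trdeg}(2) obtains the bound $\geq 12$ from those same explicit invariants. So you genuinely avoid them only if you take $\trdeg_{\mathbb C}\Frac(S)=12$ from Rosenlicht's theorem instead.

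\textbf{What the paper's route buys.} It is completely explicit and names a concrete $m$ at which $\kappa$ is attained. It also avoids the fact that subfields of finitely generated fields are finitely generated.

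\textbf{An expository point.} Your first paragraph says that for $m$ sufficiently divisible, \emph{every} degree-zero ratio $f/g$ can be rewritten in degree $m$ by multiplying by powers of $g$. That holds only ratio by ratio. The uniformity in $m$ comes from the finite-generation argument in your last paragraph, so that argument, not the earlier sentence, should carry the proof.
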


\begin{proof}
By Lemma~\ref{lem:example-ring}, there are isomorphisms of graded $\mathbb C$-algebras $R(X,\xi)\cong S=\mathcal O(V)^U$; we henceforth identify the two and write $S_m=(\Sym^mW)^U$ for the degree-$m$ piece, so that $H^0(X,\mathcal O_X(m\xi))\cong S_m$ compatibly with multiplication. Note that $x_1\in S_1$ is a nonzero element of degree one.

Let $Q_0\subset\Frac(S)$ denote the subfield of degree-zero homogeneous fractions, that is, of elements $s/s'$ with $s,s'\in S_m$ for some common $m$ and $s'\neq 0$. Since $x_1\in S_1$, every homogeneous $s\in S_d$ satisfies $s=(s/x_1^d)\,x_1^d$, so $\Frac(S)=Q_0(x_1)$. Moreover $x_1$ is transcendental over $Q_0$: a nontrivial algebraic relation $\sum_ka_kx_1^k=0$ with $a_k\in Q_0$ has homogeneous components of pairwise distinct degrees, so each term $a_kx_1^k$ vanishes separately, and as $S$ is a domain, all $a_k=0$. Hence, by Lemma~\ref{lem:trdeg}(2),
\[
\operatorname{trdeg}_{\mathbb C}Q_0=\operatorname{trdeg}_{\mathbb C}\Frac(S)-1=11.
\]

We first prove $\kappa(X,\xi)\leq 11$. For any $m\geq 1$ with $S_m\neq 0$, the closure $Z_m$ of the image of the rational map $\varphi_{|m\xi|}$ defined by $|m\xi|$ has function field generated by ratios of elements of $S_m$, hence $\mathbb C(Z_m)\subset Q_0$ and
\[
\dim Z_m=\operatorname{trdeg}_{\mathbb C}\mathbb C(Z_m)\leq\operatorname{trdeg}_{\mathbb C}Q_0=11.
\]

We next prove $\kappa(X,\xi)\geq 11$. Take $m=8$. By Lemma~\ref{lem:trdeg}(1), the degree-$8$ piece $S_8$ contains the twelve elements
\[
x_1^8,\ x_1^7x_2,\ \dots,\ x_1^7x_8,\ F_{c^{(1)}},\ \dots,\ F_{c^{(4)}},
\]
with $F_{c^{(k)}}$ as in \eqref{eq:Fc} and \eqref{eq:cvectors}. The function field of $Z_8$ contains the eleven ratios
\[
\frac{x_1^7x_i}{x_1^8}=\frac{x_i}{x_1}\ (2\leq i\leq 8),\qquad
\frac{F_{c^{(k)}}}{x_1^8}\ (1\leq k\leq 4).
\]
Let $K_0\subset Q_0$ be the subfield they generate. Then $K_0(x_1)$ contains $x_1,\dots,x_8$ and $F_{c^{(1)}},\dots,F_{c^{(4)}}$, which are algebraically independent by Lemma~\ref{lem:trdeg}(1), so $\operatorname{trdeg}_{\mathbb C}K_0(x_1)\geq 12$, whence $\operatorname{trdeg}_{\mathbb C}K_0\geq 11$. Therefore
\[
\dim Z_8=\operatorname{trdeg}_{\mathbb C}\mathbb C(Z_8)\geq\operatorname{trdeg}_{\mathbb C}K_0\geq 11.
\]
Combining the two inequalities, we conclude that $\kappa(X,\xi)=11$.
\end{proof}

\begin{rem}\label{rem:rosenlicht}
The transcendence degree $\trdeg_{\mathbb C}\Frac(S)=12$ of Lemma~\ref{lem:trdeg}(2) can also be obtained directly from Rosenlicht's theorem \cite{Ros56} (see also \cite[Theorem 1.1, \S 7]{BGR17}), without exhibiting an explicit transcendence basis. Let $U\cong(\mathbb G_a)^4$ act on $V=\mathbb A^{16}$ by Totaro's representation, and let $S:=\mathcal O(V)^U$ be the ring of invariants. The action is generically free: a point with all $x_i\neq 0$ has trivial stabilizer, since $u\cdot v=v$ forces $t(u)=0$, hence $u=0$ (the coefficient matrix of Theorem~\ref{thm:totaro} has rank $4$). Since $U$ is connected, Rosenlicht's theorem gives
\[
\trdeg_{\mathbb C}\mathbb C(V)^U=\dim V-\dim U=16-4=12.
\]
Together with the equality $\Frac(S)=\mathbb C(V)^U$ of Lemma~\ref{lem:trdeg}(2), this yields $\trdeg_{\mathbb C}\Frac(S)=12$.

With this single number, Proposition~\ref{prop:kappa} can be derived without the twelve explicit invariants of Lemma~\ref{lem:trdeg}(1) and without the analysis of $Z_8$. Indeed, since $x_1\in S_1$ is a nonconstant element of degree one, the degree-zero subfield $Q_0\subset\Frac(S)$ satisfies $\Frac(S)=Q_0(x_1)$, with $x_1$ transcendental over $Q_0$ (precisely as in the proof of Proposition~\ref{prop:kappa}), so
\[
\trdeg_{\mathbb C}Q_0=\trdeg_{\mathbb C}\Frac(S)-1=11.
\]
By the classical $D$-dimension identity of Iitaka's theory (see Section~2.1 of \cite{Laz04}),
\[
\kappa(X,\xi)=11.
\]
Thus, once Rosenlicht's theorem is accepted, Proposition~\ref{prop:kappa} follows from $16-4=12$ and $12-1=11$.
\end{rem}

\begin{proof}[Proof of Theorem~\textup{\ref{thm:main}}(3)]
By Lemma~\ref{lem:example-ring}, $K_X+B+\Mm_X\sim\xi$ with $\xi$ nef Cartier. By Propositions~\ref{prop:nu} and~\ref{prop:kappa}, $\kappa(X,\xi)=11<15=\nu(X,\xi)$, so $\xi$ is not big and not abundant.
\end{proof}

\begin{rem}\label{rem:sharpness}
Part (3) shows that the failure of finite generation in Theorem~\ref{thm:main} does not come from scarcity of sections: the ring $R(X,K_X+B+\Mm_X)$ has transcendence degree $12$ over $\mathbb C$. It also locates the example squarely outside the reach of the known positive results recalled in Section~\ref{sec:introduction}: $K_X+B+\Mm_X$ is neither big nor abundant.
\end{rem}

\end{document}